\newcommand{\varliminf}{\mathop{\underline{\lim}}}
\newcommand{\varlimsup}{\mathop{\overline{\lim}}}
\newcommand{\N}{\mathbb N}
\newcommand{\RR}{\mathbb R}
\newcommand{\Z}{\mathbb Z}
\newcommand{\cB}{\mathcal{B}}
\newcommand{\cC}{\mathcal{C}}
\newcommand{\cD}{\mathcal{D}}
\newcommand{\cF}{\mathcal{F}}
\newcommand{\cH}{\mathcal{H}}
\newcommand{\cL}{\mathcal{L}}
\newcommand{\cS}{\mathcal{S}}
\newcommand{\cV}{\mathcal{V}}
\newcommand{\vareps}{\varepsilon}
\newcommand{\sgn}{\operatorname{sgn}}
\newcommand{\one}{\mathbh1}
\newtheorem{theorem}{Theorem}
\newtheorem{lemma}{Lemma}
\newtheorem{corollary}{Corollary}
\newtheorem{conjecture}{Conjecture}
\newcommand{\wick}[1]{:\! #1 \!:}
\begin{document}
\begin{frontmatter}
\title{Diffusivity bounds for 1D Brownian polymers}
\runtitle{Diffusivity bounds for 1D Brownian polymers}

\begin{aug}
\author[A]{\fnms{Pierre} \snm{Tarr\`{e}s}\corref{}\thanksref{t1,t2}\ead[label=e1]{tarres@math.univ-toulouse.fr}},
\author[B]{\fnms{B\'{a}lint} \snm{T\'{o}th}\thanksref{t3}\ead[label=e2]{balint@math.bme.hu}} and
\author[C]{\fnms{Benedek} \snm{Valk\'{o}}\thanksref{t3,t4}\ead[label=e3]{valko@math.wisc.edu}}
\runauthor{P. Tarr\`{e}s, B. T\'{o}th and B. Valk\'{o}}
\affiliation{CNRS, Universit\'{e} de Toulouse, Budapest University of
Technology
and~University of Wisconsin}
\address[A]{P. Tarr\`{e}s\\
Institut de Math\'{e}matiques\\
CNRS, Universit\'{e} de Toulouse\\
118 route de Narbonne\\
31062 Toulouse Cedex 9\\
France\\
\printead{e1}} %adresu isvedimo komanda gale!
\address[B]{B. T\'{o}th\\
Institute of Mathematics\\
Budapest University of Technology\\
Egry J\'{o}zsef u.~1\\
Budapest, H-1111\\
Hungary\\
\printead{e2}}
\address[C]{B. Valk\'{o}\\
Department of Mathematics\\
University of Wisconsin Madison\\
480 Lincoln Drive\\
Madison, Wisconsin 53706\\
USA\\
\printead{e3}}
\end{aug}

\thankstext{t1}{On leave from the University of Oxford.}

\thankstext{t2}{Supported in part by a Leverhulme Prize.}

\thankstext{t3}{Supported in part by OTKA (Hungarian National
Research Fund) Grant K
60708.}

\thankstext{t4}{Supported in part by the NSF Grant DMS-09-05820.}

% HISTORY:
\received{\smonth{11} \syear{2009}}
\revised{\smonth{8} \syear{2010}}

% ABSTRACT
%
\begin{abstract}
We study the asymptotic behavior of a self-interacting one-dimensio\-nal
Brownian polymer first introduced by Durrett and Rogers
[\textit{Probab. Theory Related Fields} \textbf{92} (1992) 337--349].
The polymer describes a~sto\-chastic process
with a drift which is a certain average of its local time.

We show that a smeared out version of the local time function as viewed
from the actual position of the process is a Markov process in a
suitably chosen function space, and that this process has a Gaussian
stationary measure. As a first consequence, this enables us to
partially prove a conjecture about the law of large numbers for the
end-to-end displacement of the polymer formulated in Durrett and Rogers
[\textit{Probab. Theory Related Fields} \textbf{92} (1992) 337--349].

Next we give upper and lower bounds for the variance of the process
under the stationary measure, in terms of the qualitative infrared
behavior of the interaction function. In particular, we show that in
the locally self-repelling case (when the process is essentially pushed
by the negative gradient of its own local time) the process is
super-diffusive.
\end{abstract}

% KEYWORDS
%
\begin{keyword}[class=AMS]
\kwd[Primary ]{60K35}
\kwd{60K37}
\kwd{60K40}
\kwd[; secondary ]{60F15}
\kwd{60G15}
\kwd{60J25}
\kwd{60J55}.
\end{keyword}
\begin{keyword}
\kwd{Brownian polymers}
\kwd{self-repelling random motion}
\kwd{local time}
\kwd{Gaussian stationary measure}
\kwd{strong theorems}
\kwd{asymptotic lower and upper bounds}
\kwd{resolvent method}.
\end{keyword}

\end{frontmatter}

%s1 ###
\section{Introduction}
\label{s:intro}

%s1.1 ###
\subsection{Historical background}
\label{ss:background}

Let $(X(t))_{t\ge0}$ be the random process defined by $X(0):=x_0\in
\RR$ and
%
%e1 ###
\begin{equation}
\label{dr}
X(t)=
B(t)+\int_0^t \biggl(\xi(X(s))+\int_0^s f\bigl(X(s)-X(u)\bigr)\,du\biggr)\,ds,
\end{equation}
where $B(t)$ is a standard 1D Brownian motion, $f\dvtx\RR\to\RR$ is a
function with sufficient regularity, and $\xi\dvtx\RR\to\RR$ is an
initial drift profile with regularity (detailed below).

This process $X(t)$ was introduced by
Norris, Rogers and Williams \cite{norrisrogerswilliams87},
Durrett and Rogers \cite
{durrettrogers92}, as a model for the location of the end of a growing
polymer at time~$t$, in the case of zero initial profile ($\xi\equiv0$).

It is phenomenologically instructive to write the driving mechanism on
the right-hand side of (\ref{dr}) in terms of the occupation time
density (local time) of the process $X(t)$:
%
%e2 ###
\begin{equation}
\label{drlt}
X(t)= B(t)+
\int_0^t\biggl\{\xi(X(s))+\int_{-\infty}^\infty
f(z)L\bigl(s,X(s)-z\bigr)\,dz\biggr\}\,ds,
\end{equation}
where
%
%e3 ###
\begin{equation}
\label{lt}
L(s,y):=\partial_y
\int_0^s {\mathbh1}_{\{X(u)<y\}} \,du.
\end{equation}

Various choices of the function $f$ have been analyzed in detail and
mathematically deep, sometimes phenomenologically surprising results
have been obtained in the papers
\cite{cranstonlejan95,cranstonmountford96} and \cite
{mountfordtarres08}. For a detailed survey of the problem see \cite
{mountfordtarres08}.
However, satisfactory understanding of the asymptotic behavior of the
process (\ref{dr}) has not been reached in many interesting cases.

In particular, the following conjecture has remained open so far:
\begin{conjecture}[(Durrett and Rogers \cite{durrettrogers92})]
\label{conj:dur-rog}
Suppose $f$ has sufficient
fast decay at infinity, and
%
%e4 ###
\begin{equation}
\label{sr}
f(-x)=-f(x)\quad\mbox{and}\quad
\sgn(f(x))=\sgn(x).
\end{equation}
Then $X(t)/t\to0$ a.s.
\end{conjecture}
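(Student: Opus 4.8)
\medskip
The plan is to analyse \eqref{drlt} through the \emph{environment seen from the particle}, which is precisely the object constructed earlier in the paper: the smeared local time profile recentred at the current position, $\eta_t(\cdot) := L\big(t, X(t)+\cdot\big)$ read through the chosen mollifier. Since the odd symmetry in \eqref{sr} makes the drift insensitive to an additive constant in the profile, the genuinely relevant state variable is the \emph{gradient field} of $\eta_t$; unlike the profile itself, whose total mass grows like $t$, this field equilibrates, and by the results quoted above $(\eta_t)_{t\ge0}$ is a Markov process carrying a centred Gaussian stationary measure $\pi$. The first step is then to rewrite \eqref{drlt} as the additive decomposition
\[
X(t) = x_0 + B(t) + \int_0^t b(\eta_s)\,ds,
\qquad
b(\eta) = \int_{-\infty}^\infty f(-z)\,\eta(z)\,dz ,
\]
so that $b$ is a bounded linear functional of the environment; its square integrability under $\pi$, which I would check via $\var{b} = \iint f(-z)f(-w)\,\mathrm{Cov}_\pi\big(\eta(z),\eta(w)\big)\,dz\,dw$, follows from the fast decay of $f$ together with the infrared behaviour of the covariance recorded earlier.

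The core of the argument is that the asymptotic velocity vanishes. I would obtain $\int b\,d\pi = 0$ either directly, because a centred Gaussian measure forces every linear functional to have zero mean, or structurally, from the reflection $R:\eta(\cdot)\mapsto\eta(-\cdot)$: the environment dynamics commutes with spatial reflection because $f$ is odd, so $R$ preserves $\pi$, while \eqref{sr} gives $b(R\eta)=-b(\eta)$, and averaging yields $\int b\,d\pi = -\int b\,d\pi$. Granting that $\pi$ is ergodic for the environment flow and that $b\in L^1(\pi)$, the continuous-time ergodic theorem gives $t^{-1}\int_0^t b(\eta_s)\,ds \to 0$ almost surely in the stationary regime; since $B(t)/t\to0$, this delivers $X(t)/t\to0$ whenever the process is started from $\pi$.

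The main obstacle, and the reason only a partial result is within reach, is that the Durrett--Rogers process is launched from the \emph{empty} local time configuration $\xi\equiv0$, which is not distributed according to $\pi$, whereas the ergodic statement above is an equilibrium one. I would bridge the gap in two stages. First, the empty configuration is reflection symmetric, so the oddness of $f$ forces the whole law of $X(t)$ to be symmetric under $x\mapsto -x$; hence $\expect{X(t)}=0$ identically and only the spread of $X(t)$ must be controlled. Second, I would transfer the equilibrium variance estimates of the paper to the non-equilibrium start — for instance through absolute continuity, or a relative-entropy bound, of the law of $\eta_t$ with respect to $\pi$ — to conclude $\var{X(t)} = o(t^2)$, and therefore $X(t)/t\to0$ in probability by Chebyshev. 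Promoting this to the almost sure convergence asserted in Conjecture~\ref{conj:dur-rog} would require quantitative control of the relaxation of $\eta_t$ to equilibrium, rather than the qualitative ergodicity used above; this is exactly where I expect the argument to stop short of the full conjecture.
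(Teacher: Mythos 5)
Your core argument coincides with the paper's own route: the environment seen from the particle is a Markov process (Theorem \ref{thm:stat_erg}), the centred Gaussian measure $\pi$ with covariance \eqref{cov} is stationary and ergodic for it, the drift functional has zero mean under $\pi$, and the ergodic theorem applied to the decomposition \eqref{Xint} yields $X(t)/t\to0$ a.s.\ for $\pi$-distributed initial profiles --- which is exactly Corollary \ref{cor:lln}, and exactly why the paper claims only a \emph{partial} resolution of the conjecture. Two bookkeeping remarks: the paper's Markov state is the drift field $\eta(t,x)=\zeta(t,X(t)+x)$, not the mollified local time, so the relevant functional is simply $\varphi(\omega)=\omega(0)$, trivially in $L^2(\pi)$ with variance $b(0)$ (no infrared input is needed for its integrability); and your argument implicitly inherits the standing assumption $f=-b'$ with $b$ of positive type, which is genuinely stronger than the hypotheses \eqref{sr} of the conjecture as stated --- the paper's partial result is subject to the same restriction.

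The genuine gap is in your proposed bridge to the Durrett--Rogers start $\xi\equiv0$, and it is not merely a matter of missing quantitative estimates: the absolute-continuity/relative-entropy route is structurally closed. Started from the zero profile, the recentred drift field is $\eta(t,x)=\int_0^t b'(X(s)-X(t)+x)\,ds$, which for every fixed $t$ is an absolutely integrable smooth function vanishing at infinity (by the decay of $b'$), whereas a $\pi$-typical realization is a nondegenerate spatially stationary ergodic field and hence does not vanish at infinity. The two laws are therefore mutually singular for every $t>0$: no Radon--Nikodym density exists and the relative entropy is identically $+\infty$. The single one-dimensional driving Brownian motion cannot regularize this, since the dynamics only produces superpositions of translates of the fixed profile $b'$. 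Likewise, the variance bounds of Theorems \ref{thm:diffusive_bounds} and \ref{thm:super_diffusive_bounds} are equilibrium statements, and nothing in the paper transfers them out of equilibrium; your symmetry observation $\expect{X(t)}=0$ is correct but controls only the mean, not the spread. So the honest endpoint of your proposal is the same as the paper's: the stationary-regime law of large numbers, with the $\xi\equiv0$ case of Conjecture \ref{conj:dur-rog} left open, and the relaxation-to-equilibrium mechanism you would need is precisely what is missing from the current state of the art.
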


T\'{o}th and Werner later conjectured that, under the same assumptions,
$X(t)/t^{2/3}$ converges in law, by analogy with the discrete
space--time self-repelling random walk on $\Z$ which displays this
$t^{2/3}$ asymptotic behavior (with identification of the limiting
distribution, see
\cite{toth95,toth99}), and with a~continuous space--time process arising as a
scaling limit constructed in~\cite{tothwerner98}. These studies were
stimulated by the so-called \textit{true self-avoiding random walk}
(TSAW) introduced in the physics literature by Amit, Parisi and Peliti~\cite{amitparisipeliti83}.

We partially prove Conjecture \ref{conj:dur-rog}, and obtain asymptotic
lower and upper bounds in the stationary regime which translate, in the
case (\ref{sr}), into
%
%e5 ###
\begin{equation}
\varliminf_{t\to\infty} t^{-5/4} \mathbf{E}(X(t)^2)>0,\qquad
\varlimsup_{t\to\infty} t^{-3/2} \mathbf{E}(X(t)^2
)<\infty,
\end{equation}
where the lower bound is meant in the sense of Laplace transform (see
details later). We also show that the process $X(t)$ behaves
diffusively, for functions $f$ satisfying a certain summability
condition [see (\ref{sumcond})].\vadjust{\goodbreak} Our argument is based on the study of
an underlying Markov process living in the path space, which has
invariant Gaussian measure.

In the follow-up paper \cite{horvathtothveto09} the analogous polymer
model in dimensions $d\ge3$ is investigated. There full CLT is proved
for the locally self-repelling case in those dimensions, using the
nonreversible version of the Kipnis--Varadhan theory. As explained in
that paper, technical parts of that method do not apply (so far) in
lower dimensions.

%s1.2 ###
\subsection{Assumptions on $f$}
\label{sec:assumptions}

We assume throughout the paper that the Brownian polymer processes
(\ref{dr}) are under the assumption that the function $f$ is the \textit
{negative gradient of an absolutely integrable smooth function of
positive type}, that is,
%
%e6 ###
\begin{equation}
f(x)=-b^\prime(x),
\end{equation}
where
$b\in L^1(\RR)\cap C^{(\infty)}(\RR)$ has nonnegative Fourier
transform. Note that positive definiteness implies
%
%e7 ###
\begin{equation}
b(-x)=b(x),\qquad
\sup_{x\in\RR} |b(x)|=b(0).
\end{equation}
Given that $b$ is of positive type,
%it is actually sufficient to assume
%that $b$ is infinitely differentiable at $x=0$. Since $b\in L^1(\RR)$
%and $\hat b(p)\ge0$, it follows that $\hat b$ has finite moments of
%all order:
it is actually sufficient to assume its infinitely differentiability
only at $x=0$. Indeed, since $b\in L^1({\mathbb R})$ and $\hat b(p)\ge0$, it
then follows that $\hat b$ has finite moments of all orders:
for all $k\in\N$,
%
%e8 ###
\begin{equation}
\label{Fouriermoments}
\int_{-\infty}^\infty| {p} |^k \hat b(p) \,dp <\infty,
\end{equation}
and, hence, it follows that actually $b\in C^\infty(\RR)$.

Note that the regularity assumption is much more than really needed, we
assume it in order to make the technical arguments shorter.

%s1.3 ###
\subsection{Underlying Markov process and invariant Gaussian measure}

First, we let $t\mapsto\zeta(t,x)$ be the ``drift function''
environment at time $t$ [i.e., $\zeta(t,x)$ is the drift that would be
endured by the particle at time $t$ if it were in $x$]:
%
%e9 ###
\begin{equation}
\label{zetaeq}
\zeta(t,x)=\zeta(0,x)+\int_0^t b^\prime\bigl(X(s)-x\bigr)\,ds.
\end{equation}
The initial condition is $\zeta(0,x)=\xi(x)$ from (\ref{dr}) and
(\ref{drlt}). Then (\ref{dr}) reads
%
%e10 ###
\begin{equation}
\label{Xeq}
X(t)=X(0) + B(t) + \int_0^t\zeta(s,X(s))\,ds.
\end{equation}

In other words,
%
%e11 ###
\begin{equation}
\label{X_zeta_eqctd}
dX(t)= d B(t)+\zeta(t,X(t))\,dt,\qquad
d\zeta(t,x)= b^\prime\bigl(X(t)-x\bigr)\,dt.
\end{equation}

Now let $\eta$ be the environment profile as seen from the moving
point $X(t)$, that is,
%
%e12 ###
\begin{equation}
\label{etadef}
x\mapsto\eta(t,x):=\zeta\bigl(t, X(t)+x\bigr).\vadjust{\goodbreak}
\end{equation}
Then $t\mapsto\eta(t):=\eta(t,\cdot)$ \textit{is a Markov process},
on the space of smooth functions of slow increase at infinity:
%
%e13 ###
\begin{equation}
\label{Omega}
\Omega
:=
\{\omega\in C^{\infty}(\RR\to\RR) \dvtx
(\forall k\ge0,\forall l\ge1)\dvtx
\| {\omega} \|_{k,l}<\infty\},
\end{equation}
where $\| {\omega} \|_{k,l}$ are the seminorms
%
%e14 ###
\begin{equation}
\label{seminorms}
\| {\omega} \|_{k,l} :=
\sup_{x\in\RR}
(1+| {x} |)^{-1/l}
\bigl| {\omega^{(k)}(x)} \bigr| ,\qquad
k\ge0,
l\ge1.
\end{equation}
$\Omega$ endowed with these seminorms $\| {\omega} \|
_{k,l}$, $k\ge
0$, $l\ge1$, is a Fr\'{e}chet space.

Note that the existence and uniqueness of a pathwise \textit{strong}
solution of~(\ref{dr}) is standard; see, for instance, Theorem 11.2 in
\cite{rogerswilliams}. Furthermore, given the corresponding
assumptions on $b$, if $\zeta(0,\cdot)\in\Omega$, then $\zeta(t,\cdot)\in
\Omega$, for all $t\ge0$.

Using (\ref{X_zeta_eqctd}) with the definition (\ref{etadef}), we
derive by standard It\^{o}-calculus that
%
%e15 ###
\begin{equation}
\label{eq:ito}\quad
d \eta(t,x)=
\eta'(t,x)\,dB(t)+\eta'(t,x)\eta(t,0)\,dt+ \frac{\eta''(t,x)}{2} \,dt-b'(x)\,dt.
\end{equation}

We show in   Theorem \ref{thm:stat_erg} that the unique
\textit{Gaussian probability measure}~$\pi(d\omega)$ on $\Omega$ with
mean and covariance
%
%e16 ###
\begin{equation}
\label{cov}
\int_{\Omega}\omega(x)\pi(d\omega)=0,\qquad
\int_{\Omega} \omega(x)\omega(y)\pi(d\omega)
=b(x-y)
\end{equation}
is invariant for the \textit{Markov process} $t\mapsto\eta(t):=\eta
(t,\cdot)$.

Recall that Minlos' theorem (Theorem I.10 of \cite{simon74}) implies,
given $x\mapsto b(x)$ with the assumed properties, that the
expectations and covariances (\ref{cov}) define a unique translation
invariant Gaussian probability measure $\pi(d\omega)$ on the space of
tempered distributions $\cS'(\RR)$. The regularity properties of the
covariance function $b$ imply that this measure is actually supported
by the space $\Omega\subset\cS'(\RR)$; see
\mbox{\cite{marcus,marcusrosen}}.

A natural realization of the measure $\pi(d\omega)$ is the following: let
$c\dvtx\RR\to\RR$ be the \textit{unique} function of positive type for
which $b=c*c$ and let $w^\prime(y)$ be standard white noise on the
line $\RR$. Let
%
%e17 ###
\begin{equation}
\label{withwhitenoise}
\omega(x):=\int_{\RR} c(x-y)w^\prime(y)\,dy.
\end{equation}
Then the random element
$\omega(\cdot)\in\Omega$ will have exactly the distribution $\pi
(d\omega)$.

Note that the group of spatial translations
%
%e18 ###
\begin{equation}
\label{shift}
\RR\ni z\mapsto\tau_z\dvtx\Omega\to\Omega,\qquad
(\tau_z\omega)(x):=\omega(x+z)
\end{equation}
acts naturally on $\Omega$ and preserves the probability measure $\pi
(d\omega)$. As can be seen from the representation (\ref
{withwhitenoise}), the dynamical system $(\Omega, \pi(d\omega), \tau
_z\dvtx z\in\RR)$ is ergodic.
\begin{theorem}
\label{thm:stat_erg}
The Gaussian probability measure $\pi(d\omega)$ on $\Omega$, with
mean~$0$ and covariances (\ref{cov}), is time-invariant and ergodic
for the $\Omega$-valued Markov process $t\mapsto\eta(t)$.\vadjust{\goodbreak}
\end{theorem}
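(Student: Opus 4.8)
\emph{Strategy and generator.} The cleanest route is through the (formal) infinitesimal generator $\cL$ of the Markov process $\eta(t)$, read off from the Itô equation \eqref{eq:ito} and tested against the exponential (characteristic) functionals
\[
\Phi_g(\omega):=\exp\big(i\la\omega,g\ra\big),\qquad \la\omega,g\ra:=\int_\R\omega(x)g(x)\,dx,\qquad g\in C_c^\infty(\R),
\]
which are bounded, make sense on $\Omega$ (the pairing converges since elements of $\Omega$ grow sub-polynomially), are closed under products ($\Phi_g\Phi_h=\Phi_{g+h}$), and whose linear span is measure-determining and dense in $L^2(\pi)$. The organizing observation is that \eqref{eq:ito} is built from three mechanisms — a random infinitesimal shift driven by $B$, a deterministic shift at speed $\eta(t,0)$, and a constant forcing in the direction $-b'$ — so that
\[
\cL=\tfrac12\,\cS^2+\omega(0)\,\cS-\cD,
\qquad
\cS:=\int_\R\omega'(x)\tfrac{\delta}{\delta\omega(x)}\,dx,
\qquad
\cD:=\int_\R b'(x)\tfrac{\delta}{\delta\omega(x)}\,dx,
\]
where $\cS$ is the generator of the shift group $\tau_z$ and $\cD$ the directional derivative along $b'$; indeed a direct check shows $\tfrac12\cS^2\Phi_g$ reproduces exactly the $\tfrac12\eta''$ drift together with the Itô second-order term in \eqref{eq:ito}.

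\emph{Invariance.} I would prove $\int_\Omega\cL\Phi_g\,\pi(d\omega)=0$ for every $g$, which by the standard invariance criterion (the span of the $\Phi_g$ being a core for $\cL$) yields time-invariance of $\pi$. Since $\cL\Phi_g$ is a polynomial in the jointly Gaussian variables $\la\omega,g\ra,\ \la\omega,g'\ra,\ \la\omega,g''\ra,\ \omega(0)$ times $\Phi_g$, the integral is evaluated by the elementary Wick / Gaussian-tilting identities using the covariance \eqref{cov}. Three facts make everything collapse: (i) oddness of $b'$ gives $\cov{\la\omega,g'\ra}{\la\omega,g\ra}=0$ (in Fourier the integrand carries an odd factor $p$ against the even, nonnegative $\hat b(p)\,|\hat g(p)|^2$); (ii) consequently the transport term integrates to $-i\,\cov{\omega(0)}{\la\omega,g'\ra}\int_\Omega\Phi_g\,\pi(d\omega)=-i\la b,g'\ra\int_\Omega\Phi_g\,\pi(d\omega)$, which is cancelled precisely by the forcing contribution $-\cD\Phi_g=+i\la b,g'\ra\Phi_g$ — this is exactly the role of the $-b'$ drift; and (iii) the integration-by-parts identity $\cov{\la\omega,g''\ra}{\la\omega,g\ra}=-\cov{\la\omega,g'\ra}{\la\omega,g'\ra}$ makes the $\tfrac12\cS^2$ part integrate to zero on its own, as it must, $\cS$ generating a $\pi$-preserving flow.

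\emph{Ergodicity.} Ergodicity of $\pi$ for the semigroup is equivalent to $\ker\cL=\{\text{constants}\}$ in $L^2(\pi)$, and the decomposition above makes this transparent. Because $\tau_z$ preserves $\pi$, the derivation $\cS$ is antisymmetric in $L^2(\pi)$ and $\int_\Omega\cS^2 w\,\pi(d\omega)=0$ for all $w$; subtracting this from the invariance identity $\int_\Omega\cL w\,\pi(d\omega)=0$ leaves $\int_\Omega(\omega(0)\cS-\cD)w\,\pi(d\omega)=0$. As $\omega(0)\cS-\cD$ is itself a first-order derivation, this says precisely that it too is antisymmetric, so the symmetric part of $\cL$ is exactly $\tfrac12\cS^2$. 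Hence any real $u$ with $\cL u=0$ satisfies $0=\la u,\cL u\ra=\la u,\tfrac12\cS^2u\ra=-\tfrac12\norm{\cS u}^2$, forcing $\cS u=0$, i.e.\ $u$ is invariant under every shift $\tau_z$. The ergodicity of $(\Omega,\pi,\tau_z)$ recorded after \eqref{withwhitenoise} then forces $u$ to be constant, which gives ergodicity of the time dynamics.

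\emph{Main obstacle.} The formal computations above are short; the real work is the infinite-dimensional rigour. I expect the principal difficulties to be: identifying a genuine operator core for $\cL$ inside the Fr\'echet space $\Omega$ and justifying both Itô's formula and the interchange of $\cL$ with $\pi$-integration (the moment bounds \eqref{Fouriermoments} together with the seminorm estimates defining $\Omega$ should supply the integrability needed to control the unbounded factor $\omega(0)$ against $\Phi_g$), and, in the ergodicity step, the domain bookkeeping required to pass from $\cL u=0$ to $u\in\mathrm{Dom}(\cS)$ with $\cS u=0$. The latter is most safely handled by phrasing the final argument through the Dirichlet form associated with the symmetric part $\tfrac12\cS^2$ rather than manipulating the unbounded operators directly.
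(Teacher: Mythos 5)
Your proposal is correct and takes essentially the same route as the paper: your invariance computation via Gaussian/Wick identities on exponential functionals --- with the two key cancellations, namely $\int g'(x)(b*g)(x)\,dx=0$ and the transport term against the $-b'$ forcing --- is exactly the paper's formal proof in Subsection~\ref{thm:fproof} (up to the cosmetic replacement of the moment generating functional $e^{\la u,\eta(t)\ra}$ by characteristic functionals), and your ergodicity step coincides with Subsection~\ref{ss:infgen}, where $\cD(f)=-(f,Gf)=\frac12\norm{\nabla f}^2=0$ forces shift invariance and hence constancy by ergodicity of $(\Omega,\pi,\tau_z)$. The only (minor) divergence is that the paper identifies the symmetric part $S=-\frac12\nabla^2$ by explicitly computing $G^*$ in the Fock-space formalism (with stationarity read off from $G^*\one=0$ on the common core $\cC$, extended by graph closure), whereas you obtain antisymmetry of the first-order part $\omega(0)\cS-\cD$ abstractly from its being a derivation with vanishing $\pi$-integrals --- a legitimate shortcut that addresses the same domain bookkeeping you rightly flag as the main technical burden.
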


Theorem \ref{thm:stat_erg} is proved in Section \ref{ss:infgen}; we
also provide in Section \ref{thm:fproof} a short formal proof of it.

Now define the function $\varphi\dvtx\Omega\to\RR$,
%
%e19 ###
\begin{equation}
\label{phidef}
\varphi(\omega):=\omega(0).
\end{equation}
Note that (\ref{zetaeq}), (\ref{Xeq}) and (\ref{etadef}) imply
%
%e20 ###
\begin{equation}
\label{Xint}
X(t)-X(0)= B(t) + \int_0^t\varphi(\eta(s))\,ds.
\end{equation}
The law of large numbers is therefore a direct consequence of ergodicity.
\begin{corollary}
\label{cor:lln}
For $\pi$-almost all initial profiles $\zeta(0,\cdot)$,
%
%e21 ###
\begin{equation}
\label{lln2}
\lim_{t\to\infty}\frac{X(t)}{t}=0\qquad
\mbox{a.s.}
\end{equation}
\end{corollary}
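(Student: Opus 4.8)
The plan is to derive the law of large numbers directly from the ergodicity of the stationary measure $\pi$ established in Theorem~\ref{thm:stat_erg}, applied to the integral representation \eqref{Xint}. First I would start from
\begin{equation*}
\frac{X(t)-X(0)}{t}=\frac{B(t)}{t}+\frac{1}{t}\int_0^t\varphi(\eta(s))\,ds,
\end{equation*}
and treat the two terms separately. The Brownian contribution $B(t)/t\to0$ almost surely by the standard law of large numbers for Brownian motion, so the entire content of the corollary lies in the time-average of $\varphi$ along the trajectory of the Markov process $t\mapsto\eta(t)$.

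For the main term I would invoke the Birkhoff--Khinchin ergodic theorem for the stationary process $(\eta(t))_{t\ge0}$. Theorem~\ref{thm:stat_erg} asserts that $\pi$ is both invariant \emph{and} ergodic for the semigroup, so if I can guarantee integrability of the observable, the continuous-time ergodic theorem yields
\begin{equation*}
\lim_{t\to\infty}\frac1t\int_0^t\varphi(\eta(s))\,ds=\int_\Omega\varphi(\omega)\,\pi(d\omega)
\qquad\text{a.s.}
\end{equation*}
The right-hand side is $\int_\Omega\omega(0)\,\pi(d\omega)$, which by the mean-zero part of \eqref{cov} equals $0$. Combining the two limits gives $X(t)/t\to0$ almost surely, for $\pi$-almost every initial profile $\zeta(0,\cdot)=\eta(0,\cdot)$ (note that $X(0)/t\to0$ trivially). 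The qualifier ``for $\pi$-almost all initial profiles'' is exactly what one expects from the ergodic theorem, since the almost-sure convergence is with respect to the stationary law started from $\pi$.

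The step I expect to require the most care is verifying $\varphi\in L^1(\pi)$, so that the ergodic theorem applies. Here this is immediate and clean: under $\pi$ the random variable $\varphi(\omega)=\omega(0)$ is centered Gaussian with variance $b(0)$ by \eqref{cov}, hence integrable (indeed square-integrable), so no genuine obstacle arises. A secondary technical point worth a remark is the passage between the almost-sure convergence of the additive functional and the almost-sure convergence of $X(t)/t$: since \eqref{Xint} is an exact identity and both the Brownian term and the ergodic average converge almost surely on the same probability space, the conclusion follows without further estimates. Thus the corollary is a short consequence of Theorem~\ref{thm:stat_erg} via Birkhoff's theorem and the vanishing of the mean of $\varphi$ under $\pi$.
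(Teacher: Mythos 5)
Your proposal is correct and takes essentially the same route as the paper, which disposes of the corollary in one line: it ``follows directly from \eqref{Xint}, by the ergodic theorem,'' i.e.\ Birkhoff's theorem applied to the stationary ergodic process $t\mapsto\eta(t)$ furnished by Theorem~\ref{thm:stat_erg}, together with the mean-zero property \eqref{cov}. The details you spell out---the $L^1(\pi)$ check for $\varphi$ (centered Gaussian with variance $b(0)$) and $B(t)/t\to0$---are exactly the routine steps the paper leaves implicit.
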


This partially settles Conjecture 2 of \cite{durrettrogers92}.

%s1.4 ###
\subsection{Diffusivity bounds on $X(t)$}
All results in the sequel will be meant for the process being in the
stationary regime described in the last section [i.e., $\xi=\zeta
(0,\cdot)\in\Omega$ distributed according to $\pi$].

We now study the $t\to\infty$ asymptotics of the variance of displacement
%
%e22 ###
\begin{equation}
\label{diffusivity}
E(t):= \mathbf{E}(X(t)^2).
\end{equation}

First, we use a special kind of time-reversal symmetry, sometimes
called Yaglom-reversibility (see \cite
{yaglom47,yaglom49,dobrushinsuhovfritz88}), to show in Section \ref
{ss:diffusive_lower_bound} that, under the general assumptions of
Section \ref{sec:assumptions}, for any $s<t$, the random variables
$B(t)-B(s)$ and $\int_s^t\varphi(\eta(u))\,du$ are \textit
{uncorrelated}, and, hence,
%
%e23 ###
\begin{equation}
\mathbf{E}\bigl(\bigl(X(t)-X(s)\bigr)^2\bigr)=
t-s+\mathbf{E}\biggl(\biggl(\int_s^t \varphi(\eta(u))\,du\biggr)^2\biggr).
\end{equation}

Furthermore, if the following summability condition holds:
%
%e24 ###
\begin{equation}
\label{sumcond}
\rho^2:=
\int_{-\infty}^{\infty}p^{-2}\hat b(p)\,dp<\infty,
\end{equation}
then the process $X(t)$ behaves diffusively, as stated in
Theorem \ref{thm:diffusive_bounds}, shown in Section~\ref
{ss:diffusive_upper_bound}. Note that (\ref{sumcond}) is a condition
on the infrared ($|p|\ll1$) asymptotics of the spectrum $\hat b(p)$.
\begin{theorem}
\label{thm:diffusive_bounds}
Let $\rho^2$ be the constant defined in (\ref{sumcond}). Then
%
%e25 ###
\begin{equation}
\label{diffusive_bounds}
1
\le
\varliminf_{t\to\infty}t^{-1}E(t)
\le
\varlimsup_{t\to\infty}t^{-1}E(t)
\le
1+\rho^2.
\end{equation}
\end{theorem}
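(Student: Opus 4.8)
The lower bound requires no work. Applying the Yaglom-type decorrelation quoted immediately before the statement with $s=0$ gives $E(t)=t+\expect{\big(\int_0^t\varphi(\eta(u))\,du\big)^2}$, and since the second summand is nonnegative we get $t^{-1}E(t)\ge1$, hence $\varliminf_{t\to\infty}t^{-1}E(t)\ge1$. Writing $V(t):=\expect{\big(\int_0^t\varphi(\eta(u))\,du\big)^2}$, the entire content of the theorem is the upper bound $\varlimsup_{t\to\infty}t^{-1}V(t)\le\rho^2$, so I concentrate on that.

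The plan is to estimate $V(t)$ through the resolvent of the generator. Let $\cL$ be the infinitesimal generator of the stationary process $t\mapsto\eta(t)$ on $L^2(\pi)$, read off from \eqref{eq:ito}, and write $\cL=\cS+\cA$ for its symmetric and antisymmetric parts with respect to $\pi$. By stationarity $V(t)=2\int_0^t(t-s)R(s)\,ds$ with $R(s):=\langle\varphi,e^{s\cL}\varphi\rangle$, so its Laplace transform is $\int_0^\infty e^{-\lambda t}V(t)\,dt=\tfrac{2}{\lambda^2}\langle\varphi,(\lambda-\cL)^{-1}\varphi\rangle$. Setting $u_\lambda:=(\lambda-\cL)^{-1}\varphi$ and using both $\langle u_\lambda,\cA u_\lambda\rangle=0$ and the Cauchy--Schwarz inequality for the form $\langle f,g\rangle_\lambda:=\langle f,(\lambda-\cS)g\rangle$, I obtain the variational bound
\begin{equation}
\langle\varphi,(\lambda-\cL)^{-1}\varphi\rangle\le\langle\varphi,(\lambda-\cS)^{-1}\varphi\rangle .
\end{equation}
The essential point is that the antisymmetric drift drops out of the upper bound: since it does not preserve the Wiener-chaos grading it would otherwise force the solution of a non-closing Poisson equation, but here it is simply discarded, and only the symmetric part must be analysed.

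It then remains to evaluate $\langle\varphi,(\lambda-\cS)^{-1}\varphi\rangle$ as $\lambda\downarrow0$. On the first Wiener chaos $\cS$ acts on the linear functionals $\ell_g(\omega)=\int\omega(x)g(x)\,dx$ by $\ell_g\mapsto\tfrac12\ell_{g''}$, i.e.\ as multiplication by $-p^2/2$ in the spectral representation of the Gaussian field, while the multiplicative noise and the term $\eta'(x)\eta(0)$ send the first chaos into the second and hence sit in $\cA$. Since $\varphi(\omega)=\omega(0)$ lies in the first chaos with spectral weight $\hat b(p)$, a direct Fourier computation shows that $\langle\varphi,(-\cS)^{-1}\varphi\rangle$ equals, after inserting the normalisation fixed by \eqref{cov}, a multiple of the infrared integral $\int_{-\infty}^\infty p^{-2}\hat b(p)\,dp$; this is finite exactly under the summability assumption \eqref{sumcond}, which is where that hypothesis enters. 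Tracking the numerical factors together with the factor $2$ in the Laplace identity delivers $\int_0^\infty e^{-\lambda t}V(t)\,dt\le(1+o(1))\,\rho^2/\lambda^2$ as $\lambda\downarrow0$.

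The last step, and the one I expect to be the main obstacle, is the Tauberian passage from this Laplace-transform estimate back to $\varlimsup_{t\to\infty}t^{-1}V(t)\le\rho^2$: an upper bound on $\int_0^\infty e^{-\lambda t}V(t)\,dt$ does not by itself control $\varlimsup t^{-1}V(t)$ without some regularity of $V$. I would rewrite $V(t)=2\int_0^t\Psi(r)\,dr$ with $\Psi(r):=\int_0^r R(s)\,ds$ and show that $\Psi(r)$ converges to $\int_0^\infty R(s)\,ds=\lim_{\lambda\downarrow0}\langle\varphi,u_\lambda\rangle$, whereupon the Ces\`aro average $t^{-1}\int_0^t\Psi$ inherits the bound; securing the convergence (or monotonicity) of $\Psi$ from the summability or positivity of the correlation $R$ is the delicate part. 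The remaining ingredients are the standard functional-analytic prerequisites --- essential self-adjointness of $\cS$, the chaos decomposition of $L^2(\pi)$, and the membership of $\varphi$ in the negative Sobolev space of $\cS$, which is again precisely \eqref{sumcond} --- all of which I would establish using the explicit white-noise realisation \eqref{withwhitenoise}.
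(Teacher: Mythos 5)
Your treatment of the lower bound is fine and coincides with the paper's: the decorrelation of $B(t)-B(s)$ and $\int_s^t\varphi(\eta(u))\,du$ (proved in the paper via the forward/backward martingale Lemma \ref{lemma:forwbackw}) gives $E(t)=t+V(t)$ with $V(t)\ge0$. Your variational step for the upper bound, $(\varphi,(\lambda-G)^{-1}\varphi)\le(\varphi,(\lambda+S)^{-1}\varphi)$, is also correct --- it is literally the paper's \eqref{varform_upper} --- and the spectral computation of the right-hand side, giving an expression that converges under \eqref{sumcond} as $\lambda\downarrow0$, is sound (modulo normalization constants, which you should track against \eqref{ourbound}). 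The problem is the step you yourself flag as the main obstacle: the Tauberian inversion. Your argument only establishes $\int_0^\infty e^{-\lambda t}V(t)\,dt\le(1+o(1))\,\rho^2\lambda^{-2}$, and an upper bound on the Laplace transform does \emph{not} yield $\varlimsup_t t^{-1}V(t)\le\rho^2$ without regularity of $V$. Your proposed fix --- showing $\Psi(r)=\int_0^r R(s)\,ds$ converges, using ``positivity or summability'' of $R(s)=(\varphi,e^{sG}\varphi)$ --- is not available here: positivity of the autocorrelation follows from the spectral theorem only for reversible (or normal) generators, and $G$ is neither; and proving that $\Psi(r)$, hence $t^{-1}V(t)$, actually converges would essentially establish the existence of the diffusivity limit $\sigma^2=\lim_t t^{-1}E(t)$, which the paper explicitly states is open in $d=1$ (Remark (2) after the theorem). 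So as written the proposal has a genuine gap exactly where sharp constants are needed.

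The paper closes this gap differently: instead of going through Laplace transforms, it invokes the Sethuraman--Varadhan--Yau comparison result (Lemma \ref{lemma:v}), which works directly in the time domain and gives
\begin{equation}
\varlimsup_{t\to\infty} t^{-1}\expect{\big(\textstyle\int_0^t\varphi(\eta(s))ds\big)^2}
\le
\lim_{t\to\infty} t^{-1}\expect{\big(\textstyle\int_0^t\varphi(\xi(s))ds\big)^2},
\end{equation}
where $\xi$ is the \emph{reversible} process generated by $-S=\frac12\nabla^2$, i.e.\ diffusion in random scenery $\xi(t)=\tau_{Z_t}\omega$; the right-hand side is then computed explicitly and equals $\rho^2$. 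This lemma is precisely the rigorous substitute for your heuristic that the antisymmetric part can be discarded, with the Tauberian issue absorbed into its proof. Note also that the generic Tauberian tool the paper does have, Lemma \ref{lemma:tauber}, converts Laplace bounds to time bounds only up to a multiplicative constant $C>1$: it would give $\varlimsup_t t^{-1}E(t)\le C(1+\rho^2)$, i.e.\ diffusivity up to constants, but not the sharp bound $1+\rho^2$ asserted in \eqref{diffusive_bounds}. To repair your write-up, replace the Laplace-transform inversion by an appeal to Lemma \ref{lemma:v} (or reproduce its proof from \cite{sethuraman_varadhan_yau_00}), keeping your resolvent computation as the evaluation of the reversible benchmark.
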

\begin{remarks*}
(1)
The upper bound in (\ref{diffusive_bounds}) is informative only when
the integral on the right-hand side of (\ref{sumcond}) is finite,
which does not hold, for instance, in the self-repelling case $f=-b'$
of the form (\ref{sr}), where $\hat b(0)>0$.\vadjust{\goodbreak}

(2)
This result is short of proving the full CLT, namely, that
\[
\sigma^{2}
:= \lim_{t\to\infty} t^{-1}E(t)
\]
exists, is between the bounds
given in (\ref{diffusive_bounds}) and $t^{-1/2}X(t)\Rightarrow
N(0,\sigma^2)$. Recall that in the follow-up paper
\cite{horvathtothveto09} full CLT is proved for the locally
self-repelling Brownian polymer in $d\ge3$. The proof relies on the
nonreversible Kipnis--Varadhan theory. As explained in that paper,
technical parts of that method cannot be applied (so far) in lower dimensions.

Let, for all $\lambda>0$,
%
%e26 ###
\begin{equation}
\hat E(\lambda):=\int_0^\infty e^{-\lambda t} E(t) \,dt,
\end{equation}
and let $D$ be the \textit{diffusivity}, as usually defined: $D(t):=t^{-1}E(t)$.

One can easily show (by a simple change of variables) that, for $\nu>0$,
%
%e27 ###
\begin{equation}
\label{Abel_Tauber}
\{
E(t)\sim Ct^{2\nu}, t\gg1
\}
\Rightarrow
\{
\hat E(\lambda)\sim C^{\prime}\lambda^{-2\nu-1}, \lambda\ll1
\}.
\end{equation}
 Theorem \ref{thm:super_diffusive_bounds} shows bounds
for the Laplace transform $\hat E(\lambda)$ as $\lambda\to0$, based
on the \textit{resolvent method}, first used by Landim, Quastel,
Salmhofer and Yau in
\cite{landimquastelsalmhoferyau04} to provide superdiffusive estimates
on the diffusivity of asymmetric simple exclusion process in one and
two dimensions.

Then Lemma \ref{lemma:tauber}, shown in a different context in \cite
{quastelvalko} but readily translated for our purposes (see also
\cite{landimyau,kipnislandim,quastelvalko}), enables us to convert the
\textit{upper} bound on $\hat E(\lambda)$ into an upper bound on
$E(t)$, without the need of extra regularity assumption, as is usually
required in Tauberian theorems. Its proof relies on the estimate of the
variance of additive functionals of Markov processes using the $H_{-1}$ norm.

More\vspace*{1pt} precisely, let us consider the following \textit{infrared bounds}
for the correlation function $\hat b(p)$: for some $-1<\alpha<1$:
%
%e28 ###
\begin{equation}
\label{irbounds}
C_1:=\varlimsup_{p\to0} |p|^{-\alpha}\hat b(p)<\infty,\qquad
C_2:=\varliminf_{p\to0} |p|^{-\alpha}\hat b(p)>0.
\end{equation}
Of course, $C_2\le C_1$.
\end{remarks*}
\begin{theorem}
\label{thm:super_diffusive_bounds}
If for some $-1<\alpha<1$ the infrared bounds (\ref{irbounds}) hold, then
%
%e29 ###
\begin{equation}
\label{lambda_upper_bound}
\varlimsup_{\lambda\to0}
\lambda^{(5-\alpha)/2} \hat E(\lambda)
\le C_3<\infty
\end{equation}
and
%
%e30 ###
\begin{equation}
\label{lambda_lower_bound}
\varliminf_{\lambda\to0}
\lambda^{(9-2\alpha+\alpha^2)/4} \hat E(\lambda)
\ge
C_4>0,
\end{equation}
where the constants $C_3$ and $C_4$ depend only on $\alpha$, $C_1$ and $C_2$.
\end{theorem}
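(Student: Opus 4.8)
The plan is to estimate the Laplace transform $\hat E(\lambda)$ via the resolvent method, working in the Hilbert space $L^2(\pi)$ of functions on $\Omega$ with the generator $G$ of the Markov process $\eta(t)$. The starting point is the variance formula coming from Yaglom-reversibility: since $B(t)-B(s)$ and $\int_s^t\varphi(\eta)du$ are uncorrelated, one obtains
\begin{equation}
\hat E(\lambda)=\frac{1}{\lambda^2}+\frac{2}{\lambda^2}\,\big\langle\varphi,(\lambda-S)^{-1}\varphi\big\rangle_\pi + \text{(lower order)},
\end{equation}
where $S$ is the symmetric part of $G$; the key quantity is the resolvent bilinear form $\langle\varphi,(\lambda-S)^{-1}\varphi\rangle$. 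First I would set up the Gaussian Fock-space structure on $L^2(\pi)$: writing $\omega$ in the white-noise representation \eqref{withwhitenoise}, the Wick polynomials diagonalize the covariance, $\varphi(\omega)=\omega(0)$ sits in the first Wick chaos, and I would compute the explicit action of $G$ (read off from the Itô equation \eqref{eq:ito}) on Wick monomials. The drift term $\eta'(t,0)$ and the Laplacian $\eta''/2$ produce a generator that, in the Fourier/Fock representation, splits into a symmetric (self-adjoint) piece $S$ and an antisymmetric piece $A$, with $S$ acting diagonally in momentum and $A$ shifting between chaos levels.

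Next, for the upper bound \eqref{lambda_upper_bound} I would use the variational characterization of the resolvent,
\begin{equation}
\big\langle\varphi,(\lambda-S)^{-1}\varphi\big\rangle_\pi=\sup_{\psi}\Big(2\langle\varphi,\psi\rangle_\pi-\lambda\langle\psi,\psi\rangle_\pi-\langle\psi,(-S)\psi\rangle_\pi\Big),
\end{equation}
together with the fact that the full generator's Dirichlet form dominates (or is comparable to) the symmetric part, so that bounding $\langle\varphi,(\lambda-S)^{-1}\varphi\rangle$ suffices. The infrared assumption \eqref{irbounds} enters here: in momentum space $\hat b(p)\asymp|p|^\alpha$ near $p=0$, and the second-chaos contribution to the resolvent reduces to an explicit integral of the form $\int \hat b(p)\,K_\lambda(p)\,dp$ whose small-$\lambda$ behavior is governed by the $|p|\to0$ region. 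Matching the power of $\lambda$ coming from this integral with the $1/\lambda^2$ prefactor yields the exponent $(5-\alpha)/2$. For the lower bound \eqref{lambda_lower_bound} I would instead use a test-function lower bound: choose an explicit trial $\psi$ living in the first two chaos levels, optimized in momentum, and insert it into the variational formula to obtain $\langle\varphi,(\lambda-S)^{-1}\varphi\rangle\gtrsim\lambda^{-\beta}$; controlling the off-diagonal part $A$ (which transfers to higher chaos and can only decrease the effective resolvent after a graded/sector-type estimate) produces the more complicated exponent $(9-2\alpha+\alpha^2)/4$, the quadratic in $\alpha$ reflecting a two-step bound on the antisymmetric coupling.

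The main obstacle will be controlling the antisymmetric part $A$ of the generator, which couples the first chaos (where $\varphi$ lives) to higher Wick chaoses and is not bounded relative to $\sqrt{-S}$ in any naive way; this is precisely the difficulty that, as the authors note, blocks a full CLT via Kipnis--Varadhan in dimension one. For the upper bound this is circumvented by comparing with the symmetric part alone, but for the lower bound one must carefully track how much the off-diagonal dynamics degrades the trial-function estimate, which is what forces the quadratic correction in the exponent. A secondary technical point is justifying all these manipulations rigorously in the infinite-dimensional Gaussian setting — verifying that $\varphi$ and the chosen test functions lie in the relevant domains, that the resolvent identities hold, and that the momentum-space integrals converge — but given the strong regularity assumptions on $b$ in Section \ref{sec:assumptions} these are expected to be routine once the Fock-space bookkeeping is in place. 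Finally, converting the upper bound on $\hat E(\lambda)$ back to a bound on $E(t)$ uses Lemma \ref{lemma:tauber}, avoiding the regularity hypotheses of classical Tauberian theorems.
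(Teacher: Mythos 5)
Your upper-bound route is essentially the paper's: the exact identity \eqref{restoD}, $\hat E(\lambda)=\lambda^{-2}\big(1+2(\varphi,(\lambda-G)^{-1}\varphi)\big)$ with the \emph{full} generator $G$, combined with $(\varphi,(\lambda-G)^{-1}\varphi)\le(\varphi,(\lambda+S)^{-1}\varphi)$ and the explicit infrared evaluation $\frac{1}{\sqrt{2\pi}}\int\hat b(p)(\lambda+p^2/2)^{-1}dp=O(\lambda^{(\alpha-1)/2})$, gives \eqref{lambda_upper_bound}. The genuine gap is in the lower bound. Your starting formula, $\hat E(\lambda)=\lambda^{-2}+2\lambda^{-2}(\varphi,(\lambda-S)^{-1}\varphi)+\text{(lower order)}$, is not an identity: replacing $G$ by $S$ holds only as an inequality in one direction, and the assertion that the antisymmetric contribution is ``lower order'' is precisely what nobody can prove --- if it were true, the lower bound would match the upper bound at exponent $(5-\alpha)/2$, which is stronger than the theorem and is exactly the open issue behind Conjecture \ref{conj:super_diffusive_asymptotics}. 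Consequently a trial-function lower bound on the \emph{symmetric} resolvent form, which is what your plan actually computes, proves nothing about $\hat E(\lambda)$: the antisymmetric part $A$ can a priori depress $(\varphi,(\lambda-G)^{-1}\varphi)$ by a polynomial factor, and the entire content of \eqref{lambda_lower_bound} is to control by how much. Your proposed repair, a ``graded/sector-type estimate'' on $A$, invokes a tool the authors explicitly state is unavailable in $d=1$ (it is why \cite{horvath_toth_veto_09} works only in $d\ge3$); this is why the two exponents in the theorem do not match.

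The missing idea is the Landim--Quastel--Salmhofer--Yau variational formula \eqref{varform},
\begin{equation*}
(\varphi,(\lambda-G)^{-1}\varphi)
=
\sup_{\psi\in\cH}\big\{2(\varphi,\psi)-(\psi,(\lambda+S)\psi)-(A\psi,(\lambda+S)^{-1}A\psi)\big\},
\end{equation*}
in which the antisymmetric part enters with a minus sign, so that restricting the supremum to any subspace yields a rigorous lower bound with the degradation caused by $A$ built in. The paper restricts to $\psi=\int u(x)\omega(x)\,dx\in\cH_1$ (one chaos level, not two: only the image $A_+\psi\in\cH_2$ appears, while $A_-\psi=0$ by \eqref{van_H0_H1}), and then all three terms are explicit Gaussian computations, because $(\lambda+S)^{-1}$ is the random-scenery resolvent \eqref{Sresolvent} with kernel $g_\lambda$. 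Cauchy--Schwarz on the second-chaos term gives $(A_+\psi,(\lambda+S)^{-1}A_+\psi)\le\int\hat b(p)\,p^2\hat u(p)^2K(\lambda,p)\,dp$ with $K(\lambda,p)=\frac{1}{\sqrt{2\pi}}\int\hat b(q-p)(\lambda+q^2/2)^{-1}dq$; one bounds $K(\lambda,p)\le C\lambda^{(\alpha-1)/2}$ on the window $|p|\le\lambda^{1/2}$, and the optimized expression $\int\hat b(p)\big(\lambda+p^2/2+K(\lambda,p)p^2\big)^{-1}dp$, restricted to that window, is at least $c\,\lambda^{-(1-\alpha)^2/4}$. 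This is the true source of the quadratic exponent: $(9-2\alpha+\alpha^2)/4=2+(1-\alpha)^2/4$, with $2$ from the prefactor $\lambda^{-2}$ and $(1-\alpha)^2/4$ from the interplay of $\hat b(p)\asymp|p|^\alpha$ with the size $\lambda^{(\alpha-1)/2}$ of $K$ on $|p|\le\lambda^{1/2}$ --- not from a ``two-step bound on the antisymmetric coupling.'' Until you replace your symmetric-resolvent formula by the full variational principle with the $(A\psi,(\lambda+S)^{-1}A\psi)$ term, the lower-bound half of your argument does not get off the ground.
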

\begin{lemma}
\label{lemma:tauber}
There exists an explicit finite constant $C$ such that
%
%e31 ###
\begin{equation}
E(t)\le C t^{-1} \hat E (t^{-1}).\vadjust{\goodbreak}
\end{equation}
\end{lemma}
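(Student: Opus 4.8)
The plan is to turn the estimate into a statement about the \emph{variogram} of $X$ in the stationary regime, and then to use a spectral (Lévy–Khinchine) representation. In the stationary regime $t\mapsto\eta(t)$ is stationary and, as recorded just above the statement, $B(t)-B(s)$ is uncorrelated with $\int_s^t\varphi(\eta(u))du$; hence for all $0\le s\le t$,
\[
\expect{(X(t)-X(s))^2}=(t-s)+\expect{\Big(\int_s^t\varphi(\eta(u))du\Big)^2}=E(t-s),
\]
so $X$ has $L^2$-stationary increments and $E$, extended to $\R$ by $E(-t):=E(t)$, is its variogram. I would first check that $E$ is a continuous conditionally negative definite function. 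Continuity (indeed $C^2$-smoothness) holds because $V(t):=E(t)-t=\expect{(\int_0^t\varphi(\eta(u))du)^2}$ satisfies $V''(t)=2\,\expect{\varphi(\eta(0))\varphi(\eta(t))}$. Conditional negative definiteness is the standard variogram computation: given $x_1,\dots,x_n\in\R$ and reals $c_i$ with $\sum_ic_i=0$, put $t_i:=x_i-\min_kx_k\ge0$; then $E(x_i-x_j)=E(t_i-t_j)=\expect{(X(t_i)-X(t_j))^2}$ and, using $\sum_ic_i=0$,
\[
\sum_{i,j}c_ic_j\,E(x_i-x_j)=-2\,\expect{\Big(\sum_ic_iX(t_i)\Big)^2}\le0 .
\]

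Next I would invoke the Lévy–Khinchine/Schoenberg representation of continuous conditionally negative definite functions. Since $E$ is real, even and $E(0)=0$, the linear and imaginary parts drop out and one obtains a constant $q\ge0$ and a nonnegative measure $\nu$ on $\R\setminus\{0\}$ with $\int(1\wedge\xi^2)\,\nu(d\xi)<\infty$ such that
\[
E(t)=qt^2+\int_{\R\setminus\{0\}}\big(1-\cos(t\xi)\big)\,\nu(d\xi).
\]
This is exactly the positive spectral (``$H_{-1}$'') representation of the variance of the additive functional $\int_0^t\varphi(\eta)$, with $\nu$ playing the role of the spectral measure; the key point is that $\nu\ge0$. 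A subadditivity argument ($\sqrt{E(t+s)}\le\sqrt{E(t)}+\sqrt{E(s)}$, from the stationary increments) gives $E(t)\le Ct^2$, so $\hat E(\lambda)<\infty$ for every $\lambda>0$.

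The lemma then reduces to an elementary computation. Integrating the representation term by term (all integrands nonnegative, so Tonelli applies) and using $\int_0^\infty e^{-\lambda u}(1-\cos u\xi)\,du=\xi^2/\big(\lambda(\lambda^2+\xi^2)\big)$, I would evaluate at $\lambda=t^{-1}$ to obtain
\[
t^{-1}\hat E(t^{-1})=2qt^2+\int_{\R\setminus\{0\}}\frac{(t\xi)^2}{1+(t\xi)^2}\,\nu(d\xi).
\]
Comparing with $E(t)=qt^2+\int(1-\cos t\xi)\,\nu(d\xi)$ term by term, it suffices to use the two pointwise inequalities $1\le2$ and
\[
1-\cos v\le C_0\,\frac{v^2}{1+v^2}\quad(v\in\R),\qquad C_0:=\sup_{v\neq0}\frac{(1-\cos v)(1+v^2)}{v^2}<\infty ,
\]
the supremum being finite because the ratio tends to $1/2$ as $v\to0$ and stays bounded (by about $2$) as $|v|\to\infty$. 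This yields $E(t)\le C\,t^{-1}\hat E(t^{-1})$ with the explicit constant $C=\max\{C_0,1\}$, as required.

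The one step carrying real content is the passage to the cosine representation with a \emph{nonnegative} measure $\nu$: this is precisely where stationarity of $\eta$ (equivalently, the stationary-increment property of $X$) is essential, and it is what substitutes, in this non-reversible setting, for the completely monotone spectral representation of the autocorrelation that would be available only in the reversible case. Once $\nu\ge0$ is secured, the interchange of integrals and the trigonometric comparison are routine, and I anticipate no trouble with integrability, since each integrand is dominated by a constant multiple of $1\wedge\xi^2$, which is $\nu$-integrable by construction.
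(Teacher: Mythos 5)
Your proposal is correct, but it takes a genuinely different route from the paper's. The paper does not prove Lemma~\ref{lemma:tauber} in-house: it imports it from \cite{quastelvalko}, where the estimate comes from Markov-process machinery --- a Kipnis--Varadhan-type decomposition $\int_0^t\varphi(\eta(s))\,ds = M_t + \lambda\int_0^t u_\lambda(\eta(s))\,ds + u_\lambda(\eta(0)) - u_\lambda(\eta(t))$ with $u_\lambda=(\lambda-G)^{-1}\varphi$ evaluated at $\lambda=t^{-1}$, which bounds $\expect{\big(\int_0^t\varphi(\eta(s))ds\big)^2}$ by a constant times $t\,(\varphi,(t^{-1}-G)^{-1}\varphi)$ and then yields the claim via the resolvent identity \eqref{restoD}. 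You bypass the Markov structure entirely: from stationarity of $\eta$ and the orthogonality recorded in \eqref{variancesum} you get $L^2$-stationary increments for $X$, hence the evenly extended $E$ is a continuous conditionally negative definite function vanishing at $0$ (your real-coefficient computation suffices, since for a real even kernel the imaginary cross terms cancel by symmetry), and Schoenberg/L\'evy--Khinchine gives $E(t)=qt^2+\int(1-\cos t\xi)\,\nu(d\xi)$ with $q\ge0$, $\nu\ge0$; the lemma then reduces, after the Tonelli-justified term-by-term Laplace transform, to the elementary comparison $1-\cos v\le C_0\,v^2/(1+v^2)$, and all integrability points check out ($E(t)\le t+b(0)t^2$ already gives $\hat E(\lambda)<\infty$, so the subadditivity detour is not even needed; your constant $\max\{C_0,1\}$ works, and in fact $C_0$ alone suffices since the $qt^2$ term is matched with coefficient $1/2$). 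As to what each approach buys: yours is more elementary and strictly more general --- it applies verbatim to any square-integrable process with wide-sense stationary increments, with no generator, variational formula, or sector-type input --- while the resolvent route of \cite{quastelvalko} additionally yields a maximal-inequality version (with $\sup_{s\le t}$ inside the expectation) and dovetails directly with the variational formula \eqref{varform} on which the superdiffusive bounds of Theorem~\ref{thm:super_diffusive_bounds} rest.
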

\begin{remarks*}
(1)
By Lemma \ref{lemma:tauber} the bound (\ref{lambda_upper_bound}) can
be converted into
%
%e32 ###
\begin{equation}
\label{t_upper_bound}
\varlimsup_{t\to\infty}
t^{-(3-\alpha)/2} E(t)
\le
C'_3<\infty.
\end{equation}

(2)
Although we cannot translate the lower bound on $\hat E(\lambda)$ into
an asymptotic lower bound on $E(t)$, by (\ref{Abel_Tauber}) the bound
(\ref{lambda_lower_bound}) essentially means
%
%e33 ###
\begin{equation}
\label{t_lower_bound}
\varliminf_{t\to\infty}
t^{-(5-2\alpha+\alpha^2)/4} E(t)
\ge
C'_4>0.
\end{equation}

(3)
The locally self-avoiding case corresponds to $\alpha=0$. In this case
our results give
%
%e34 ###
\begin{equation}
C''_4 t^{5/4} \le E(t) \le C''_3 t^{3/2}
\end{equation}
with some constants $C''_4>0$, $C''_3<\infty$. Here the first
inequality is meant in the sense of Laplace transforms. Recall that in
this particular case, the conjectured order in
\cite{tothwerner98} is $E(t)\asymp t^{4/3}$.

(4) We make the following conjecture:
\begin{conjecture}
\label{conj:super_diffusive_asymptotics}
Under the conditions of Theorem \ref{thm:super_diffusive_bounds} the
true asymptotic order is
%
%e35 ###
\begin{equation}
\label{conj_order}
E(t)=\mathbf{E}(X(t)^2) \asymp t^{{4}/({3+\alpha})}.
\end{equation}
\end{conjecture}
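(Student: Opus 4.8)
The plan is to reduce the statement \eqref{conj_order} to a matching two-sided estimate on the Laplace transform, namely
\begin{equation}
\label{targetlaplace}
\hat E(\lambda)\asymp \lambda^{-(7+\alpha)/(3+\alpha)},\qquad \lambda\to0,
\end{equation}
since by \eqref{Abel_Tauber} this is the Laplace image of $E(t)\asymp t^{4/(3+\alpha)}$, and the \emph{upper} half of \eqref{targetlaplace} can be transported back to real time by Lemma \ref{lemma:tauber} without any extra regularity hypothesis. Thus the entire problem is to sharpen Theorem \ref{thm:super_diffusive_bounds} so that the two exponents $(5-\alpha)/2$ and $(9-2\alpha+\alpha^2)/4$ on $\hat E(\lambda)$ both collapse to the single value $(7+\alpha)/(3+\alpha)$. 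I would work throughout in the Gaussian space $L^2(\pi)$ graded by Hermite degree, writing the generator as $\mathcal L=\mathcal S+\mathcal A$, where $\mathcal S$ is the degree-preserving Ornstein--Uhlenbeck part and $\mathcal A$ is the degree-changing convective part arising from the term $\eta'(t,x)\eta(t,0)$ in \eqref{eq:ito}. The current $\varphi$ of \eqref{phidef} lies in the first Hermite sector, and $\hat E(\lambda)$ is controlled by the resolvent $\langle\varphi,(\lambda-\mathcal L)^{-1}\varphi\rangle$.

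Before attacking rigour I would fix the target exponent by a self-consistent mode-coupling computation, which simultaneously dictates the correct trial functions. At the linear (bare) level the current autocorrelation is $C_0(s)=\expect{\varphi(\eta(s))\varphi(\eta(0))}\sim\int\hat b(p)\,e^{-p^2 s/2}\,dp\sim s^{-(1+\alpha)/2}$ under \eqref{irbounds}, so the bare diffusivity $1+2\int_0^t C_0(s)\,ds$ grows like $t^{(1-\alpha)/2}$ and reproduces \emph{exactly} the bare exponent $(3-\alpha)/2$ of the proven upper bound. The superdiffusive feedback is captured by replacing the free relaxation rate $p^2/2$ of a mode by the renormalized rate $p^2 D(s)$, where $D(t):=t^{-1}E(t)$ is the running diffusivity; this gives $C(s)\sim\big(D(s)\,s\big)^{-(1+\alpha)/2}$ and the closed equation $D(t)\sim\int_0^t C(s)\,ds$, whose self-similar solution is $D(t)\sim t^{(1-\alpha)/(3+\alpha)}$ and hence $E(t)\sim tD(t)\sim t^{4/(3+\alpha)}$. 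The point is operational: it shows that the right trial object for the variational bounds is the two-block function in which the internal pair of modes $(p,-p)$ is propagated not by the free rate $p^2$ but by the dressed rate $p^2 D(\lambda)$, i.e.\ one must \emph{resum}, rather than truncate, the two-block contributions underlying the cruder estimates of Theorem \ref{thm:super_diffusive_bounds}.

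For the rigorous lower bound I would feed this dressed two-block function into a Dirichlet-type variational lower bound of the form $\langle\varphi,(\lambda-\mathcal L)^{-1}\varphi\rangle\ge\sup_{\psi}\{2\langle\varphi,\psi\rangle-\langle\psi,(\lambda-\mathcal S)\psi\rangle-\langle\mathcal A\psi,(\lambda-\mathcal S)^{-1}\mathcal A\psi\rangle\}$, taking $\psi$ supported in the first two Hermite sectors with the second-sector profile tuned to the self-consistent propagator; the gain over the proven exponent $(9-2\alpha+\alpha^2)/4$ should come precisely from using the renormalized instead of the bare internal propagator. For the upper bound I would iterate the resolvent identity to second order in $\mathcal A$ and show that the second-sector resolvent appearing there is itself bounded below by a self-consistently enhanced diffusion, so that the enhanced rate reenters the denominator of the momentum integral and lowers the exponent from $(5-\alpha)/2$ to $(7+\alpha)/(3+\alpha)$. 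This is the delicate point, since a lower bound must be inserted inside an upper bound, which makes the argument genuinely nonperturbative.

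The hard part, and the reason this remains a conjecture, is justifying the mode-coupling closure: one must control all Hermite sectors of degree $\ge3$ and show that the nested resolvent expansion truncates (or resums) consistently. In $d\ge3$ this is accomplished in \cite{horvath_toth_veto_09} via the graded sector condition and a relaxation of Varadhan's condition, but in $d=1$ the relevant series diverges — the off-diagonal operator $\mathcal A$ is too strong relative to the degree-preserving part $\mathcal S$ — so the two-block closure cannot be controlled by those methods, and a genuinely new a priori estimate on the higher sectors, or a different resummation scheme, would be required. Establishing that the dressed two-block propagator is simultaneously an upper and a lower bound on the true second-sector resolvent is the single step I expect to be the real obstacle.
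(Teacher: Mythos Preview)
The statement you are addressing is a \emph{conjecture} in the paper, not a theorem: the paper offers no proof of \eqref{conj_order} at all, only the remark that the exponent $4/(3+\alpha)$ is formally consistent with results and conjectures in \cite{durrett_rogers_92,toth_werner_98,mountford_tarres_08}. So there is nothing to compare your argument against, and the question is simply whether your proposal constitutes a proof. It does not, and you say so yourself: your text is a heuristic derivation of the exponent together with a programme for how a proof might go, ending with an explicit admission that the key step (``justifying the mode-coupling closure'', controlling Hermite sectors of degree $\ge3$) is open and is precisely why the statement is a conjecture.

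Your self-consistency computation is correct and is the standard way to \emph{guess} the exponent: replacing the bare mode relaxation $p^2/2$ by a dressed rate $p^2 D(s)$ and solving $D(t)\sim\int_0^t (sD(s))^{-(1+\alpha)/2}\,ds$ indeed gives $D(t)\sim t^{(1-\alpha)/(3+\alpha)}$ and hence $E(t)\sim t^{4/(3+\alpha)}$. But this is phenomenology, not a bound. The variational step you sketch for the lower bound --- inserting a ``dressed'' two-block trial function into the variational formula \eqref{varform} --- would at best produce an improved lower bound, and the matching upper bound requires, as you note, a lower bound on the second-sector resolvent to be fed back into an upper bound, which is circular unless one has an a priori estimate you do not supply. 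One small inaccuracy: the symmetric part $S=-\tfrac12\nabla^2$ in this paper is the generator of diffusion in random scenery \eqref{drscesemigroup}, not an Ornstein--Uhlenbeck operator; it has no spectral gap, which is part of why the graded-sector machinery of \cite{horvath_toth_veto_09} fails here.
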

\begin{remark*}
This conjecture is formally in agreement with the order of the limit
proved in \cite{mountfordtarres08} under superballistic scaling, for
slowly decaying (with distance) self-interaction functions $f$, and the
corresponding conjectures formulated in
\mbox{\cite{durrettrogers92,tothwerner98}}.
\end{remark*}
\end{remarks*}

%s1.5 ###
\subsection{\texorpdfstring{Formal proof of Theorem \protect\ref{thm:stat_erg}}{Formal proof of Theorem 1}}
\label{thm:fproof}
In order to prove that $\pi$ is indeed time-stationary, we have to
show that for any (sufficiently smooth) test function~$u(\cdot)$ the
moment generating functional
$
\mathbf{E}(\exp\{\langle u,\eta(t)\rangle\})
$
is constant in time. Here we used the notation
%
%e36 ###
\begin{equation}
\langle u, v\rangle:= \int_{-\infty}^{\infty} v(x)u(x)\,dx.
\end{equation}
[Note that starting from Section \ref{section:spaces} the brackets
$\langle\cdot, \cdot\rangle$ will have a different meaning; see
(\ref
{Vscprod}).] It follows from (\ref{eq:ito}) that
%
%e37 ###
\begin{eqnarray}\label{condchar}
&&
d\mathbf{E}(\exp\{\langle u,\eta(t)\rangle\})\nonumber\\
&&\qquad= \mathbf
{E}
(d \exp\{\langle u,\eta(t)\rangle\})
\\
&&\qquad= \mathbf{E}\bigl(e^{\langle u,\eta(t)\rangle} \bigl( \tfrac12
\langle
u^{\prime\prime}, \eta(t)\rangle+ \tfrac12 \langle u^{\prime},
\eta(t)\rangle
^2 - \langle u^{\prime}, \eta(t) \rangle\eta(t,0) + \langle
u^{\prime}, b\rangle
\bigr)\bigr)\,dt.\hspace*{-15pt}\nonumber
\end{eqnarray}

Let $X,Y,Z$ be jointly Gaussian with zero mean. Then it is easy to show
(by differentiations of the moment generating function of their\vadjust{\goodbreak} joint
distribution) that
%
%e39 ###
%e38 ###
\begin{eqnarray}
\mathbf{E}(Ye^X)&=&\exp\{\mathbf{E}(X^2)/2\}
\mathbf{E}(XY),
\\
\mathbf{E}(YZe^X)&=&\exp\{\mathbf{E}(X^2)/2\}
\bigl(\mathbf{E}(YZ)+\mathbf{E}(XY)\mathbf{E}
(XZ)\bigr).
\end{eqnarray}
Using these identities,
if $\eta$ is a zero mean Gaussian field with covariance $b$ (as it is
assumed), the right-hand side of (\ref{condchar}) can be computed
explicitly to deduce
%
%e40 ###
\begin{eqnarray}
&&e^{(1/2)\langle u, b* u\rangle}
\bigl\{
\tfrac12
\langle u^{\prime\prime}, b*u\rangle
+
\tfrac12
\langle u^{\prime}, b*u^{\prime}\rangle\nonumber\\[-8pt]\\[-8pt]
&&\hphantom{e^{(1/2)\langle u, b* u\rangle}
\bigl\{}
{} +
\tfrac12
\langle u^{\prime}, b*u\rangle^2
-
\langle u^{\prime}, b*u\rangle\langle u, b\rangle
\bigr\}\,dt.\nonumber
\end{eqnarray}
Note that for any test function $u$ we have
$\langle u^{\prime}, b*u\rangle=0$, since $b$ is even.
Thus, after one integration by parts we note that
the previous expression is always 0, which shows that
$\mathbf{E}(\exp\{\langle u,\eta(t)\rangle\})$
is indeed constant in time.
\begin{remark*}
It is not hard to check that translation invariant Gaussian fields with
nonzero centering and the same covariances
%
%e41 ###
\begin{equation}
\label{cov2}
\int_{\Omega}\omega(x)\pi(d\omega)=v\in\RR,\qquad
\int_{\Omega} \omega(x)\omega(y)\pi(d\omega) -v^2
=b(x-y)
\end{equation}
are also time-stationary (and ergodic) for the process $t\mapsto\eta
(t)$. If we start our process with these initial distributions, then
the corresponding laws of large numbers
%
%e42 ###
\begin{equation}
\label{lln3}
\lim_{t\to\infty}\frac{X(t)}{t}=v
\qquad\mbox{a.s.}
\end{equation}
hold, which means \textit{ballistic behavior} of the process $t\mapsto
X(t)$. We will not pursue these regimes in the present note.
\end{remark*}

%s2 ###
\section{Spaces and operators}
\label{section:spaces}

%s2.1 ###
\subsection{Spaces}
\label{ss:measures}

The natural formalism for the proofs of our theorems is that of Fock
space and Gaussian Hilbert spaces. We follow the usual notation of
Euclidean quantum field theory; see, for example,
\cite{simon74}.

Endow the space of real-valued smooth functions of rapid decrease\break
(Schwartz space) $\cS=\cS(\RR)$ with the inner product
%
%e43 ###
\begin{eqnarray}
\label{Vscprod}
\langle u,v\rangle:\!&=&
\int_{-\infty}^\infty\int_{-\infty}^\infty u(x)b(x-y)v(y)\,dx\,dy\nonumber\\[-8pt]\\[-8pt]
&=&
\int_{-\infty}^\infty\hat u(-p) \hat v(p) \hat b(p)\,dp
<\infty,\nonumber
\end{eqnarray}
and let $\cV$ be the completion of $\cS(\RR)$ with respect to this
Euclidean norm.

We denote $\cH:=\cL^2(\Omega,\pi)$. Then
%
%e44 ###
\begin{equation}
\label{Vimbedding}
\phi\dvtx\cS\to\cH,\qquad
\phi(v)(\omega):= \int_{-\infty}^\infty\omega(x)v(x)\,dx\vadjust{\goodbreak}
\end{equation}
is an isometric embedding of $(\cV, \langle\cdot,\cdot\rangle)$ in
$\cH$:
%
%e45 ###
\begin{equation}
\label{isometry}
\Vert\phi(v)\Vert^2_{\cH}
=\Vert v \Vert^2_{\cV},
\end{equation}
so $\phi$ extends as an isometric embedding of $\cV$ into the
Gaussian subspace of~$\cH$.

The Hilbert space $\cH$ is naturally graded
%
%e46 ###
\begin{equation}
\label{Hgrading}
\cH=\cH_0\oplus\cH_1\oplus\cH_2\oplus\cdots\oplus\cH_n\oplus
\cdots,
\end{equation}
where
%
%e49 ###
%e48 ###
%e47 ###
\begin{eqnarray}
\label{grade0}
\cH_0&:=&\{c\one, c\in\RR\},
\\
\label{grade1}
\cH_1&:=&\{\phi(v), v\in\cV\},
\\
\label{graden}
\cH_n&:=&\operatorname{span}
\{\wick{\phi(v_1)\cdots\phi(v_n)}, v_1,\ldots,v_n\in\cV\}.
\end{eqnarray}
Here and throughout the rest of the paper $\wick{X_1\cdots X_n}$
denotes the \textit{Wick product} of the jointly Gaussian random
variables $(X_1,\ldots, X_n)$. For basics of Fock space and Wick
products see, for example, chapter I of
\cite{simon74} and/or chapter III of \cite{janson97}.

%s2.2 ###
\subsection{Operators}
\label{ss:operators}

We use the standard notation of Fock spaces.
Given a~(bounded or unbounded) closed linear operator $A$ over the
basic Hilbert space~$\cV$, its second quantized version over the
Hilbert space $\cH$ will be denoted $d\Gamma(A)$. This latter one
acts on Wick monomials as follows:
%
%e50 ###
\begin{equation}
\label{sqop}\quad
d\Gamma(A){\wick{\phi(v_1)\cdots\phi(v_j)\cdots\phi(v_n)}}
=
\sum_{j=1}^n
\wick{\phi(v_1)\cdots\phi(Av_j)\cdots\phi(v_n)},
\end{equation}
and it is extended by linearity and graph closure.

A particularly important linear operator over $\cV$ is the
differentiation with respect to the $x$-variable:
%
%e51 ###
\begin{equation}
\label{grad}
\partial v(x) := v^\prime(x).
\end{equation}
This is an unbounded skew self-adjoint operator defined on the dense domain
%
%e52 ###
\begin{equation}
\label{domgrad}
\operatorname{Dom}(\partial)=
\biggl\{v\in\cV\dvtx\int_{-\infty}^\infty p^2|\hat v(p)|^2\hat b(p)\,dp<\infty
\biggr\}.
\end{equation}
We denote the second quantization of $\partial$ by
%
%e53 ###
\begin{equation}
\label{sqgrad}
\nabla:=d\Gamma(\partial).
\end{equation}
Then $\nabla$ is also an unbounded and skew self-adjoint operator over
$\cH$. We shall also need the operator $\nabla^2$ acting on $\cH$.
(Note that this is \textit{not} the second quantization of~$\partial^2$.)

Given an element $u\in\cV$, the creation and annihilation (or:
raising and lowering) operators associated to it are
%
%e54 ###
\begin{equation}
\label{cran1}
a^*(u)\dvtx\cH_n\to\cH_{n+1},\qquad
a(u)\dvtx\cH_n\to\cH_{n-1},
\end{equation}
acting on Wick monomials as
%
%e56 ###
%e55 ###
\begin{eqnarray}
\label{cropdef}
a^*(u)\wick{\phi(v_1)\cdots\phi(v_n)}
&=&
\wick{\phi(u)\phi(v_1)\cdots\phi(v_n)},
\\
\label{anopdef}
a(u)\wick{\phi(v_1)\cdots\phi(v_n)}
&=&
\sum_{j=1}^n\langle u,v_j\rangle
{\wick{\phi(v_1)\cdots\phi(v_{j-1})\phi(v_{j+1})\cdots\phi(v_n)}}\mbox{.}\hspace*{-28pt}
\end{eqnarray}
We will also use the following straightforward commutation relation:
%
%e57 ###
\begin{equation}\label{ccr2}
[\nabla,a(u)]=a(u').
\end{equation}
We define the unitary involution $J$ on $\cH$:
%
%e58 ###
\begin{equation}
\label{invop}
Jf(\omega):=f(-\omega),\qquad
J\upharpoonright_{\cH_n}=(-1)^n I\upharpoonright_{\cH_n}.
\end{equation}
The subspace of smooth functions
%
%e59 ###
\begin{equation}
\label{core}\quad
\cC:=\{
F(\phi(v_1),\ldots,\phi(v_k))\dvtx F\in C_0^\infty(\RR^k\to\RR),
v_1,\ldots,v_k\in\cS
\}\subset\cH
\end{equation}
is a \textit{common core} for all (unbounded) operators defined above
and used in the sequel. They act on functions of this form as follows:
%
%e63 ###
%e62 ###
%e61 ###
%e60 ###
\begin{eqnarray}\qquad\quad
\label{nablaonsmooth}
\nabla F(\phi(v_1),\ldots,\phi(v_k))
&=&
\sum_{l=1}^k \partial_l
F(\phi(v_1), \ldots, \phi(v_k))
\phi(v_l^\prime),
\\
\label{nablasquaredonsmooth}
\nabla^2 F(\phi(v_1),\ldots,\phi(v_k))
&=&
\sum_{l,m=1}^k \partial^2_{l,m}
F(\phi(v_1), \ldots, \phi(v_k))
\phi(v_l^\prime)\phi(v_m^\prime)
\nonumber\\[-8pt]\\[-8pt]
&&{}
+
\sum_{l=1}^k \partial_l
F(\phi(v_1), \ldots, \phi(v_k))
\phi(v_l^{\prime\prime}),
\nonumber\\
\label{anoponsmooth}
a(u)F(\phi(v_1),\ldots,\phi(v_k))
&=&
\sum_{l=1}^k \partial_l
F(\phi(v_1), \ldots, \phi(v_k))
\langle u,v_l\rangle,
\\
\label{croponsmooth}
a^*(u)F(\phi(v_1),\ldots,\phi(v_k))
&=&
\phi(u)F(\phi(v_1), \ldots, \phi(v_k))\nonumber\\[-8pt]\\[-8pt]
&&{}-
a(u)F(\phi(v_1),\ldots,\phi(v_k)).\nonumber
\end{eqnarray}
For basics about creation, annihilation and second quantized operators
see, for example,
\cite{janson97} or \cite{simon74}. In particular, note that, for
$F\in\cC$ and $u\in\cV$ such that
\mbox{$b*u\in\Omega$}, the following identities hold:
%
%e65 ###
%e64 ###
\begin{eqnarray}
\label{multipl}
\bigl(\bigl(a^*(u)+a(u)\bigr)F\bigr)(\omega)
&=&
(\phi(u)F)(\omega),
\\
\label{dirder}
(a(u)F)(\omega)
&=&
\lim_{\vareps\to0}
\vareps^{-1}\bigl(F(\omega+\vareps b*u)-F(\omega)\bigr).
\end{eqnarray}
These identities are easily checked on Wick monomials and extended by
linearity. Identity (\ref{multipl}) means that the sum of the creation
and annihilation operators corresponding to an element of the basic
space $\cV$ is the multiplication operator with the isometric Gaussian
embedding of that vector. The meaning of (\ref{dirder}) is that the
annihilation operator $a(u)$ is actually a~``directional derivative'' in
the direction $b*u\in\Omega$. This latter one is a particular case of
a well-known identity from Malliavin calculus; see, for example,
chapter~XV and, in particular, Theorem 15.8 of~\cite{janson97}.

Notice that $\nabla$ is the infinitesimal generator of the \textit
{unitary group of spatial translations}
while $\nabla^2/2$ is the infinitesimal generator of the Markovian
semigroup of \textit{diffusion in random scenery}:
%
%e67 ###
%e66 ###
\begin{eqnarray}
\label{shiftgroup}
\exp\{z\nabla\}&=&T_z,\qquad
T_zf(\omega):=f(\tau_z\omega),
\\
\label{drscesemigroup}
\exp\{t\nabla^2/2\}&=&Q_t,\qquad
Q_tf(\omega):=\int\frac{\exp\{-z^2/(2t)\}}{\sqrt{2\pi t}} f(\tau
_z\omega)\,dz.
\end{eqnarray}

%s2.3 ###
\subsection{The infinitesimal generator, stationarity,
Yaglom-reversibility, ergodicity}
\label{ss:infgen}

We denote
%
%e68 ###
\begin{equation}
\label{semigroup}
P_t\dvtx\cH\to\cH,\qquad
P_t f(\omega)
:=
\mathbf{E}\bigl(f(\eta(t))|\eta(0)=\omega\bigr).
\end{equation}
Then $[0,\infty)\ni t\mapsto P_t\in\cB(\cH)$
(bounded operators on $\cH$)
is a positivity
preserving contraction semigroup on~$\cH$.

Given $f=F(\phi(v_1),\ldots, \phi(v_k))\in\cC$, from (\ref
{X_zeta_eqctd}), (\ref{etadef}) and using (\ref
{nablaonsmooth})--(\ref{croponsmooth}), one can compute
%
%e69 ###
\begin{equation}
\label{infgencomp}\quad
\lim_{t\to0}\frac{\mathbf{E}(f(\eta(t)-f(\eta(0)))
|\eta(0)=\omega)}{t}
=
\biggl(\frac12 \nabla^2 + \phi(\delta) \nabla+ a(\delta^\prime)
\biggr)f(\omega).
\end{equation}

This operator is extended from $\cC$ by graph closure. Now, using the
commutation relation (\ref{ccr2}), we obtain the \textit{infinitesimal
generator} of the semigroup~$P_t$:
%
%e70 ###
\begin{equation}
\label{infgen}
G:=\tfrac12 \nabla^2 + a^*(\delta) \nabla+ \nabla a(\delta).
\end{equation}
The adjoint of the generator is
%
%e71 ###
\begin{equation}
\label{adjinfgen}
G^*:=\tfrac12 \nabla^2 - a^*(\delta) \nabla- \nabla a(\delta).
\end{equation}
For later use we introduce notation for the symmetric (self-adjoint)
and antisymmetric (skew-self-adjoint) parts of the generator:
%
%e73 ###
%e72 ###
\begin{eqnarray}
\label{symgen}
S&:=&-\tfrac12(G+G^*)=-\tfrac12 \nabla^2,
\\
A&:=&\tfrac12(G-G^*)= a^*(\delta)\nabla+\nabla a(\delta)
=:A_++A_-.
\end{eqnarray}
Note that
%
%e74 ###
\begin{eqnarray}
\label{grading}
S\dvtx\cH_n&\to&\cH_n,\qquad
A_+\dvtx\cH_n\to\cH_{n+1},\nonumber\\[-8pt]\\[-8pt]
A_-\dvtx\cH_n&\to&\cH_{n-1},\qquad
A_-=-A_+^*\nonumber
\end{eqnarray}
and
%
%e75 ###
\begin{equation}
\label{van_H0_H1}
S\upharpoonright_{\cH_0}=0,\qquad
A_+\upharpoonright_{\cH_0}=0,\qquad
A_-\upharpoonright_{\cH_0\oplus\cH_1}=0.
\end{equation}
\begin{pf*}{Proof of Theorem \ref{thm:stat_erg} and Corollary
\ref{cor:lln}}
It is clear that
%
%e76 ###
\begin{equation}
\label{stateq}
G^*\one= 0,
\end{equation}
and, hence, it follows that $\pi$ is indeed a stationary distribution
of the process $t\mapsto\eta(t)$ and $G^*$ is itself the
infinitesimal generator of the stochastic semigroup $P^*_t$ of the time
reversed process.

Proving ergodicity is easy. For any $f\in\cH$ the Dirichlet form of
the process $t\mapsto\eta(t)$ is given by
%
%e77 ###
\begin{equation}
\label{df}
\cD(f):=
-(f,G f)=
-\bigl(f, \tfrac12\nabla^2 f\bigr)=
\tfrac12\| {\nabla f} \|^2,
\end{equation}
where $(\cdot, \cdot)$ and \mbox{$\| \cdot \|$} denote the
scalar product
and $L_2$ norm in $\cH$.
So,
%
%e78 ###
\begin{equation}
\label{erg}
\{ \cD(f)=0 \}
\quad\Leftrightarrow\quad
\{ \nabla f=0 \}
\quad\Leftrightarrow\quad
\{ f=\mbox{const. } \pi\mbox{-a.s.} \},
\end{equation}
since $z\mapsto\tau_z$ acts ergodically on $(\Omega,\pi)$.

Corollary \ref{cor:lln} follows directly (\ref{Xint}), by the ergodic theorem.
\end{pf*}

The generator $G$ is, of course, not reversible, but the so-called
\textit{Yaglom-reversibility}
\cite{yaglom47,yaglom49,dobrushinsuhovfritz88} holds:
%
%e79 ###
\begin{equation}
\label{yaglom}
G^{*}=JGJ.
\end{equation}
This identity means that the stationary forward process $(-\infty
,\infty)\ni t\mapsto\eta(t)$ and %the flipped backward process
%
%e80 ###
\begin{equation}
\label{revproc}
(-\infty,\infty)\ni t\mapsto\tilde\eta(t):=-\eta(-t)
\end{equation}
obey the same law. We will call $t\mapsto\tilde\eta(t)$ the \textit
{flipped-backward process}.

%s3 ###
\section{Diffusive bounds}
\label{section:diffusive_bounds}

%Throughout this section we assume (\ref{sumcond}) and we prove Theorem

The aim of this section is to prove Theorem \ref{thm:diffusive_bounds}.

%s3.1 ###
\subsection{Diffusive lower bound}
\label{ss:diffusive_lower_bound}

For $-\infty<s\le t<\infty$ denote
%
%e81 ###
\begin{equation}
\label{marti}
M(s,t)
:=
X(t)-X(s)-\int_s^t\varphi(\eta(u))\,du
=
B(t)-B(s).
\end{equation}
\begin{lemma}
\label{lemma:forwbackw}
For $s\in\RR$ fixed the process $[s,\infty)\ni t\mapsto M(s,t)$ is a
forward martingale with respect to the \textit{forward filtration} $\{
\cF_{(-\infty,t]}\dvtx t\ge s\}$ of the process $t\mapsto\eta(t)$. For
$t\in\RR$ fixed the process $(-\infty,t]\ni s\mapsto M(s,t)$ is a
backward martingale with respect to the \textit{backward filtration} $\{
\cF_{[s,\infty)}\dvtx s\le t\}$ of the process $t\mapsto\eta(t)$.
\end{lemma}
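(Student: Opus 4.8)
The plan is to exploit the identity $M(s,t)=B(t)-B(s)$ recorded in \eqref{marti}, which reduces both assertions to martingale properties of the Brownian increments relative to the two $\eta$-filtrations. First I would establish the \emph{forward} statement by a direct adaptedness argument. Since $X$ is the pathwise strong solution of \eqref{Xeq} driven by $B$, the process $t\mapsto\eta(t)$ is adapted to the forward Brownian filtration, so $\cF_{(-\infty,t]}\subseteq\sigma(B(r):r\le t)$. For $s\le t\le t'$ I would split $M(s,t')=(B(t')-B(t))+M(s,t)$; the future increment $B(t')-B(t)$ has mean zero and is independent of $\sigma(B(r):r\le t)\supseteq\cF_{(-\infty,t]}$, while $M(s,t)$ is $\cF_{(-\infty,t]}$-measurable by \eqref{marti}. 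Hence $\condexpect{M(s,t')}{\cF_{(-\infty,t]}}=M(s,t)$, which is precisely the forward martingale property.

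The \emph{backward} statement cannot be proved by the same direct route, and this is where the main obstacle lies: for $u\ge s$ the profile $\eta(u)$ depends on the entire past of $B$ (through the flow of \eqref{X_zeta_eqctd}), so $\cF_{[s,\infty)}$ is \emph{not} contained in any backward Brownian filtration, and the naive conditioning argument breaks down. The plan is to circumvent this by invoking Yaglom-reversibility \eqref{yaglom}, which asserts that the flipped-backward process $\tilde\eta(r):=-\eta(-r)$ obeys the same law as $\eta$, i.e. is again a stationary diffusion with generator $G$. I would then transport the already-proved forward martingale property from $\tilde\eta$ back to $\eta$.

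Concretely, I would first check the filtration identity $\cF^{\tilde\eta}_{(-\infty,r]}=\sigma(\tilde\eta(\rho):\rho\le r)=\sigma(\eta(u):u\ge -r)=\cF_{[-r,\infty)}$, so that the forward filtration of $\tilde\eta$ is exactly the backward filtration of $\eta$. Next I would identify the driving Brownian motion $\tilde B$ of $\tilde\eta$ with a time-reversal of $B$: using $\varphi(\tilde\eta(\rho))=\tilde\eta(\rho,0)=-\varphi(\eta(-\rho))$ together with \eqref{marti}, one finds that $\tilde B(r):=B(-r)-B(0)$ is a genuine Brownian motion (by time-reversal invariance of $B$) and that the corresponding martingale satisfies $\tilde M(r_1,r_2)=\tilde B(r_2)-\tilde B(r_1)=-M(-r_2,-r_1)$. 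Applying the forward martingale property (established in the first paragraph, valid for $\tilde\eta$ since $\tilde\eta\stackrel{d}{=}\eta$) to $r_2\mapsto\tilde M(r_1,r_2)$ with respect to $\cF^{\tilde\eta}_{(-\infty,r_2]}$, and then substituting $t=-r_1$, $s=-r_2$, $s'=-r_2'$, I would obtain $\condexpect{M(s',t)}{\cF_{[s,\infty)}}=M(s,t)$ for all $s'\le s\le t$, which is the backward martingale property.

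The only genuinely delicate point is the backward direction, and within it the precise identification of $\tilde B$ as a Brownian motion and the bookkeeping of signs and time changes under the flip $r\leftrightarrow -u$; the forward direction and the filtration identity are routine once the Brownian representation \eqref{marti} and adaptedness are in hand.
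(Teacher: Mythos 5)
Your proposal is correct and follows essentially the same route as the paper: the forward statement is read off from $M(s,t)=B(t)-B(s)$, and the backward statement is deduced from the flipped-backward process $\tilde\eta(r)=-\eta(-r)$ via Yaglom-reversibility, the oddness of $\varphi$, and the reversal of displacements --- exactly the paper's facts (1)--(3), which the paper implements as the infinitesimal drift computation \eqref{bwmart} while you transport the martingale property globally through the filtration identity $\cF^{\tilde\eta}_{(-\infty,r]}=\cF_{[-r,\infty)}$; your bookkeeping $\tilde M(r_1,r_2)=-M(-r_2,-r_1)$, $\tilde B(r)=B(-r)-B(0)$ is the correct transformation (the backward drift must come out as $-\varphi(\eta(s))$, so your signs are in fact more consistent than those printed in \eqref{bwmart}, which contain a harmless slip). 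Two minor inaccuracies, neither fatal: the inclusion $\cF_{(-\infty,t]}\subseteq\sigma(B(r):r\le t)$ fails in the stationary regime since the random field carries extra randomness (what is true, and all you need, is that the increments $(B(r)-B(t))_{r\ge t}$ are independent of $\cF_{(-\infty,t]}$), and the $\cF_{(-\infty,t]}$-measurability of $M(s,t)$ --- equivalently, that $X(t)-X(s)$ is an a.s.\ measurable functional of the $\eta$-path --- does not follow ``by \eqref{marti}'' but requires reconstructing the driving noise from the profile path, a point the paper likewise leaves implicit in its facts (1)--(2).
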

\begin{pf}
There is nothing to prove about the first statement: the integral on
the right-hand side of (\ref{marti}) was chosen exactly so that it
compensates the conditional expectation of the infinitesimal increments
of $X(t)$.\vadjust{\goodbreak}

We turn to the second statement of the lemma.
We use the following facts:

%(1)
%The displacements are reverted on the flipped-backward trajectories
%$t\mapsto\tilde\eta(t)$ defined in (\ref{revproc}):
%%
%%e82 ###
%
%(2)
%The forward process $t\mapsto\eta(t)$ and flipped-backward process
%$t\mapsto\tilde\eta(t)$ are identical in law.
%
%(3)
%The function $\omega\mapsto\varphi(\omega)$ is odd with respect to
%the flip map $\omega\mapsto-\omega$.
%
%Putting these facts together (in this order), we obtain
%%
%%e83 ###
%&&
%_{[s,\infty)}\biggr)\nonumber\\
%&&\qquad=
%X(-s)}{h}\bigg|\tilde\cF_{(-\infty,-s]}\biggr)
%&&\qquad=
%-\varphi(\tilde\eta(-s))
%=
%%

(1) For any $s\le t$, there is a Borel function $F_{s,t}$ mapping a.s.
$(\eta(u))_{s\le u\le t}$ to $X(t)-X(s)$.
By symmetry, $F_{-t,-s}$ maps the flipped-backward process $(\tilde
\eta(u))_{-t\le u\le -s}$ in (\ref{revproc}) to
%
%e82
\begin{equation}
\tilde X(-s) - \tilde X(-t) = X(s)-X(t).
\end{equation}

(2)
The forward process $t\mapsto\eta(t)$ and flipped-backward process
$t\mapsto\tilde\eta(t)$ are  identical in law.

(3)
The function $\omega\mapsto\varphi(\omega)$ is odd with respect to the
flip map $\omega\mapsto -\omega$.

Putting these facts together (in this order) we obtain
%
%e83 ###
\begin{eqnarray}
\label{bwmart}
&&
\lim_{h\to0}\mathbf{E}\biggl(\frac{X(s-h)-X(s)}{-h}\bigg|\cF_{[s,\infty)}\biggr)\nonumber\\
&&\qquad=
-\lim_{h\to0}\mathbf{E}\biggl(\frac{\tilde X(-s+h)-\tilde X(-s)}{h}\bigg|\tilde
\cF_{(-\infty,-s]}\biggr)\\
&&\qquad=
-\varphi(\tilde\eta(-s))
=\varphi(\eta(s)).\nonumber
\end{eqnarray}
\upqed\end{pf}

From Lemma \ref{lemma:forwbackw} it follows that
%
%e84 ###
\begin{eqnarray}
\label{variancesum}
\mathbf{E}\bigl(\bigl(X(t)-X(s)\bigr)^2\bigr)
&=&
\mathbf{E}\bigl(\bigl(M_{[s,t]}\bigr)^2\bigr)+
\mathbf{E}\biggl(\biggl(\int_s^t \varphi(\eta(u))\,du\biggr)^2\biggr)
\nonumber\\[-8pt]\\[-8pt]
&=&
t-s
+
\mathbf{E}\biggl(\biggl(\int_s^t \varphi(\eta(u))\,du\biggr)^2
\biggr),\nonumber
\end{eqnarray}
hence the lower bound in (\ref{diffusive_bounds}).

%s3.2 ###
\subsection{Diffusive upper bound}
\label{ss:diffusive_upper_bound}

Throughout this section we assume (\ref{sumcond}).
First we recall a general result about the limiting variance of
additive functionals integrated along the trajectory of a stationary
and ergodic Markov process.

Let $t\mapsto\eta(t)$ be a stationary and ergodic Markov process on
the abstract probability space $(\Omega, \pi)$. Denote the
infinitesimal generator acting on $\cL^2(\Omega,\pi)$ and its
adjoint by $G$, respectively, by $G^*$. These might be unbounded
operators, but it is assumed that they have a common core of
definition. Denote the symmetric (self-adjoint), respectively, the
antisymmetric (skew-self-adjoint) part of the infinitesimal generator by
%
%e85 ###
\begin{equation}
\label{symmasymmgengen}
S:=-\tfrac12(G+G^*),\qquad
A:=\tfrac12(G-G^*).
\end{equation}
Let $t\mapsto\xi(t)$ be the \textit{reversible} Markov process on the
same state space $(\Omega, \pi)$ which has the infinitesimal
generator $-S$.

The following lemma is proved in
\cite{sethuramanvaradhanyau00}. See also the survey papers \cite
{olla,landimollavaradhan} and further references cited therein.
\begin{lemma}
\label{lemma:v}
Let $\varphi\in\cL^2(\Omega,\pi)$ with $\int\varphi \,d\pi=0$.
Then
%
%e86 ###
\begin{equation}
\label{svy_bound}
\qquad\varlimsup_{t\to\infty} t^{-1}\mathbf{E}\biggl(\biggl(\int
_0^t\varphi(\eta(s))\,ds\biggr)^2\biggr)\le
\lim_{t\to\infty} t^{-1}\mathbf{E}\biggl(\biggl(\int_0^t\varphi
(\xi(s))\,ds\biggr)^2\biggr).
\end{equation}
\end{lemma}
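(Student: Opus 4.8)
The plan is to establish the variational inequality \eqref{svy_bound} by comparing the variance of the additive functional along the genuine (non-reversible) trajectory $t\mapsto\eta(t)$ with the variance along the reversible trajectory $t\mapsto\xi(t)$ generated by $-S$. The standard way to attack both sides of this inequality is through the Laplace-transformed resolvent formulation. I would introduce, for $\lambda>0$, the solution $u_\lambda$ of the resolvent equation $(\lambda - G)u_\lambda = \varphi$, so that $u_\lambda = (\lambda - G)^{-1}\varphi$. The Laplace transform of the left-hand side variance then has the explicit expression
\begin{equation}
\int_0^\infty e^{-\lambda t}\expect{\big(\int_0^t\varphi(\eta(s))ds\big)^2}dt
=
\frac{2}{\lambda^2}\,(\varphi, u_\lambda),
\end{equation}
which is the classical formula for the variance of an additive functional of a stationary Markov process. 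The limiting variance coefficient on the left is obtained, after an Abelian argument, as $\varlimsup_{t\to\infty}t^{-1}(\cdots)\le 2\varlimsup_{\lambda\to0}\lambda(\varphi,u_\lambda)$.

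Next I would derive the key a priori bound on $(\varphi,u_\lambda)$ using the decomposition $G = -S + A$ with $S=-\tfrac12\nabla^2\ge0$ self-adjoint and $A$ skew-self-adjoint. Pairing the resolvent equation against $u_\lambda$ and taking the real part kills the antisymmetric part, since $(u_\lambda, A u_\lambda)$ is purely imaginary: this yields
\begin{equation}
\lambda\,\norm{u_\lambda}^2 + (u_\lambda, S u_\lambda) = (\varphi, u_\lambda).
\end{equation}
This already shows $(\varphi,u_\lambda)\ge (u_\lambda, S u_\lambda)\ge0$ and gives control of both $\norm{u_\lambda}$ and the $H_1$-type seminorm $\norm{\nabla u_\lambda}$. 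The comparison with the reversible process is then a consequence of the general principle that removing the skew-symmetric part $A$ can only increase the variance of the additive functional: for the reversible process with generator $-S$, the corresponding resolvent quantity is $(\varphi,(\lambda+S)^{-1}\varphi)$, and a convexity/monotonicity argument (or equivalently the variational characterization $\lambda(\varphi,u_\lambda)=\inf_{w}\{\lambda\norm{w}^2 + \langle$quadratic form involving $S$ and $A\rangle\}$) shows that $(\varphi,u_\lambda)\le(\varphi,(\lambda+S)^{-1}\varphi)$. Passing to the limit $\lambda\to0$ on both sides and invoking the explicit resolvent formula for the reversible process converts this into \eqref{svy_bound}.

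I expect the main obstacle to be the rigorous justification of the variational comparison rather than the formal identities. Specifically, the inequality $(\varphi,u_\lambda)\le(\varphi,(\lambda+S)^{-1}\varphi)$ is the heart of the matter, and making it precise requires the variational representation of the resolvent bilinear form together with a careful argument that the antisymmetric part $A$ contributes nonnegatively to the effective resistance. Since $G$ and $G^*$ are unbounded, one must check that $u_\lambda$ lies in a suitable domain where all these pairings and integrations by parts are legitimate; this is exactly where the common core $\cC$ and the graded structure \eqref{grading} of the operators become essential. The cleanest route, rather than reproving this from scratch, is to cite the general theorem of Sethuraman, Varadhan and Yau \cite{sethuraman_varadhan_yau_00}, whose hypotheses (stationarity, ergodicity, a common core for $G$ and $G^*$, and $\varphi\in\cL^2(\Omega,\pi)$ with zero mean) are all verified in our setting, and simply confirm that the abstract framework applies verbatim to the generator \eqref{infgen}.
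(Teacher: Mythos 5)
Your proposal lands in exactly the place the paper does: the paper gives no proof of Lemma~\ref{lemma:v} beyond the citation to Sethuraman, Varadhan and Yau \cite{sethuraman_varadhan_yau_00}, whose hypotheses (stationarity, ergodicity, a common core for $G$ and $G^*$, zero-mean $\varphi\in\cL^2(\Omega,\pi)$) are indeed verified by the framework of Section~\ref{section:spaces}, so deferring to that theorem is the intended argument, and your preliminary resolvent sketch matches the paper's own remark after \eqref{varform_upper} that dropping the antisymmetric part is, modulo a Tauberian inversion, equivalent to this lemma. One small caution: the step you label ``Abelian,'' passing from $\varlimsup_{\lambda\to0}\lambda(\varphi,u_\lambda)$ back to $\varlimsup_{t\to\infty}t^{-1}\expect{\big(\int_0^t\varphi(\eta(s))ds\big)^2}$, is actually the Tauberian direction and is not automatic; it is precisely the content supplied by the cited theorem (or, in the paper's superdiffusive section, by Lemma~\ref{lemma:tauber}), so your final decision to invoke \cite{sethuraman_varadhan_yau_00} rather than rely on that step is what makes the proof complete.
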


In our particular case
%
%e87 ###
\begin{equation}
\label{drsgen}
S=-\tfrac12\nabla^2,
\end{equation}
and the reversible process
$t\mapsto\xi(t)$ will be the so-called \textit{diffusion in random
scenery} process; see, for example,
\cite{kestenspitzer79} or the more recent survey \cite
{denhollandersteif06}. That means
%
%e88 ###
\begin{equation}
\label{rwrs}
\xi(t):=\tau_{Z_t}\omega,
\end{equation}
where $t\mapsto Z_t$ is a standard Brownian motion, independent of the
field $\omega$. The function $\varphi\dvtx\Omega\to\RR$ is $\varphi
(\omega)=\omega(0)$. Thus, the upper bound in (\ref{svy_bound}) will be
%
%e89 ###
\begin{eqnarray}
\label{ourbound}
\lim_{t\to\infty} t^{-1}\mathbf{E}\biggl(\biggl(\int_0^t\varphi
(\xi(s))\,ds\biggr)^2\biggr)
&=&
\lim_{t\to\infty} t^{-1}\mathbf{E}\biggl(\biggl(\int_0^t\omega
(Z_s)\,ds\biggr)^2\biggr)\nonumber\\[-8pt]\\[-8pt]
&=&
\int_{-\infty}^\infty p^{-2} \hat b(p)\,dp.\nonumber
\end{eqnarray}
Here the last step is just explicit computation, with expectation taken
over the Brownian motion $Z(t)$ \textit{and} over the random scenery
$\omega$. The straightforward details are left for the reader.

%s4 ###
\section{Superdiffusive bounds}
\label{section:superdiffusive_bounds}

From (\ref{variancesum}) it follows that
%
%e90 ###
\begin{eqnarray}
E(t)&=&
t +
\mathbf{E}\biggl(\biggl(\int_0^t\varphi(\eta(s))\,ds\biggr)^2\biggr)\nonumber\\[-8pt]\\[-8pt]
&=&
t +
2\int_{0}^t (t-s)\mathbf{E}(\varphi(\eta(s))\varphi(\eta
(0))) \,ds.\nonumber
\end{eqnarray}

Taking the Laplace transform of the previous equation, we get
%
%e91 ###
\begin{equation}
\label{restoD}
\hat E(\lambda)=
\lambda^{-2}
\bigl(1+ 2 \bigl(\varphi, (\lambda-G)^{-1}\varphi\bigr)\bigr).
\end{equation}
We will estimate $(\varphi, (\lambda-G)^{-1}\varphi)$
using the following variational formula; see, for example, (2.5) of
\cite{landimquastelsalmhoferyau04}:
%
%e92 ###
\begin{eqnarray}
\label{varform}
&&\bigl(\varphi, (\lambda-G)^{-1}\varphi\bigr)
\nonumber\\[-8pt]\\[-8pt]
&&\qquad=
\sup_{\psi\in\cH}\bigl\{
2(\varphi, \psi)
-
\bigl(\psi, (\lambda+S)\psi\bigr)
-
\bigl(A\psi, (\lambda+S)^{-1} A\psi\bigr)
\bigr\}.\nonumber
\end{eqnarray}

%s4.1 ###
\subsection{Superdiffusive upper bounds}
\label{ss:superdiffusive_upper_bounds}

\mbox{}

\begin{pf*}{Proof of Theorem \ref{thm:super_diffusive_bounds}---upper
bound}
The {upper bound} will follow from simply dropping the last term on the
right-hand side of (\ref{varform}):
%
%e93 ###
\begin{equation}
\label{varform_upper}\qquad\quad
\bigl(\varphi, (\lambda-G)^{-1}\varphi\bigr)
\le
\sup_{\psi\in\cH}\bigl\{
2(\varphi, \psi)
-
\bigl(\psi, (\lambda+S)\psi\bigr)
\bigr\}
=
\bigl(\varphi, (\lambda+S)^{-1}\varphi\bigr).
\end{equation}
Note that---modulo a Tauberian inversion---this is equivalent to the
argument in Section \ref{ss:diffusive_upper_bound}.

Using (\ref{drscesemigroup}) and (\ref{symgen}), we write the
resolvent of $-S$ as
%
%e94 ###
\begin{equation}
\label{Sresolvent}\quad
(\lambda+S)^{-1}=
\int_{-\infty}^{\infty} \int_0^\infty
\frac{1} {\sqrt{2\pi t}} e^{-\lambda t-z^2/(2t)} T_z
\,dt \,dz=
\int_{-\infty}^{\infty}
g_{\lambda}(z) T_z \,dz,
\end{equation}
where the function $g_{\lambda}(z)$ and its Fourier transform $\hat
g_{\lambda}(p)$ are
%
%e95 ###
\begin{equation}
g_{\lambda}(z)
=\frac{1} {\sqrt{2\lambda}} e^{-\sqrt{2\lambda}|z|},\qquad
\hat g_{\lambda}(p)
=\frac{1}{\sqrt{2\pi}} \frac{1}{\lambda+ p^2/2}.
\end{equation}
Hence, by the Parseval formula,
\begin{eqnarray*}
\bigl(\varphi, (\lambda+S)^{-1}\varphi\bigr)
&=&
\int_{-\infty}^\infty g_\lambda(z)\mathbf{E}(\omega(0)\omega
(z))\\
&=&
\frac{1}{\sqrt{2\pi}}
\int_{-\infty}^{\infty} \frac{\hat b(p)} {\lambda+ p^2/2} \,dp.
\end{eqnarray*}
By (\ref{irbounds}), we can choose $\delta>0$ so that for $|p|<\delta$
%
%e96 ###
\begin{equation}
\label{choosedelta}
\frac{C_2}{2} |p|^{\alpha}\le\hat b(p)\le2 C_1 |p|^{\alpha}.
\end{equation}
Then
%
%e97 ###
\begin{eqnarray}
\label{upper}
&&\bigl(\varphi, (\lambda+S)^{-1}\varphi\bigr)\nonumber\\
&&\qquad\le
C_1\sqrt{\frac{2}{\pi}}
\int_{-\infty}^\infty\frac{|p|^{\alpha}}{\lambda+ p^2/2}\,dp
+
\sqrt{\frac{2}{\pi}}
\int_{|p|>\delta} p^{-2} \hat b(p)\,dp
\\
&&\qquad=
\lambda^{({\alpha-1})/{2}}
C_1 \sqrt{\frac{2}{\pi}}
\int_{-\infty}^{\infty} \frac{|q|^{\alpha}}{1+q^2/2}\,dq +
\sqrt{\frac{2}{\pi}}
\int_{|p|>\delta} p^{-2} \hat b(p)\,dp.\nonumber
\end{eqnarray}
Since both integrals in (\ref{upper}) are finite (as $|\alpha|<1$),
the upper bound (\ref{lambda_upper_bound}) follows from (\ref
{restoD}), (\ref{varform_upper}) and (\ref{upper}).
\end{pf*}

%s4.2 ###
\subsection{Superdiffusive lower bounds}
\label{ss:superdiffusive_lower_bounds}

\mbox{}

\begin{pf*}{Proof of Theorem \ref{thm:super_diffusive_bounds}---lower
bound}
Lower bounds are obtained by taking on the right-hand side of (\ref
{varform}) the supremum over\vadjust{\goodbreak} the subspace~$\cH_1$ only:
%
%e99 ###
%e98 ###
\begin{eqnarray}
&&\bigl(\varphi, (\lambda-G)^{-1}\varphi\bigr)\nonumber\\[-8pt]\\[-8pt]
&&\qquad\ge
\sup_{\psi\in\cH_1}\bigl\{
2(\varphi, \psi)
-
\bigl(\psi, (\lambda+S)\psi\bigr)
-
\bigl(A\psi, (\lambda+S)^{-1} A\psi\bigr)
\bigr\}\nonumber
\\
\label{varform_1}
&&\qquad=
\sup_{\psi\in\cH_1}\bigl\{
2(\varphi, \psi)
-
\bigl(\psi, (\lambda+S)\psi\bigr)
-
\bigl(A_+\psi, (\lambda+S)^{-1} A_+\psi\bigr)
\bigr\}.
\end{eqnarray}
The last identity is due to (\ref{van_H0_H1}).

We write $\psi\in\cH_1$ as
%
%e100 ###
\begin{equation}
\psi= \int_{-\infty}^\infty u(x)\omega(x)\,dx
\end{equation}
with $u$ an even function and compute the three terms on the right-hand
side of (\ref{varform_1}). The first two are straightforward:
%
%e102 ###
%e101 ###
\begin{eqnarray}
\label{first}
\hspace*{-20pt}(\varphi, \psi)
&=&
\int_{-\infty}^\infty u(x)\mathbf{E}(\omega(0)\omega(x))\,dx
=
\int_{-\infty}^{\infty}
\hat b(p) \hat u(p) \,dp,\\[-18pt]\nonumber
\end{eqnarray}
\begin{eqnarray}
\label{second}\hspace*{50pt}
\hspace*{-20pt}&&\bigl(\psi, (\lambda+S)\psi\bigr)\nonumber\\
&&\qquad=
\int_{-\infty}^\infty
\int_{-\infty}^\infty
\biggl(
\lambda u(x)u(y) + \frac12 u^{\prime}(x)u^{\prime}(y)
\biggr)
\mathbf{E}(\omega(x)\omega(y))
\,dx\,dy\\
\hspace*{-20pt}&&\qquad=
\int_{-\infty}^{\infty}
(\lambda+ p^2/2)
\hat b(p) \hat u(p)^2 \,dp.\hspace*{-35pt}\nonumber
\end{eqnarray}
In order to compute the third term, we first note that
%
%e103 ###
\begin{equation}
\label{A+psi}
A_+\psi=
\int_{-\infty}^\infty u^{\prime}(x) \wick{\omega(0)\omega(x)} dx
\end{equation}
and, hence,
%
%e104 ###
\begin{eqnarray}
\label{third2}\hspace*{5pt}
&&
\bigl(A_+\psi, (\lambda+S)^{-1} A_+\psi\bigr)
\nonumber\\
&&\qquad
=
\int_{-\infty}^\infty
\int_{-\infty}^\infty
\int_{-\infty}^\infty
g_\lambda(z)u^{\prime}(x)u^{\prime}(y)
\mathbf{E}\bigl(\wick{\omega(0)\omega(x)}\,\wick{\omega(z)\omega
(z+y)}\bigr)
\,dx\,dy\,dz
\nonumber\\
&&\qquad
=
\int_{-\infty}^\infty
\int_{-\infty}^\infty
\int_{-\infty}^\infty
g_\lambda(z)u^{\prime}(x)u^{\prime}(y)
\bigl(b(z)b(z+y-x)\nonumber\\[-8pt]\\[-8pt]
&&\qquad\quad\hspace*{139pt}{} + b(z+y)b(z-x)\bigr)
\,dx\,dy\,dz
\nonumber\\
&&\qquad
=
\frac{1}{2\sqrt{2\pi}}
\int_{-\infty}^\infty
\int_{-\infty}^\infty
\frac{\hat b(p)\hat b(q)}{\lambda+ (p-q)^2/2}
\bigl(
p \hat u(p)
-
q \hat u(q)
\bigr)^2
\,dq\,dp
\nonumber\\
&&\qquad
\le
\int_{-\infty}^\infty
\hat b(p)
p^2 \hat u(p)^2K(\lambda,p)\,dp,\nonumber
\end{eqnarray}
where
%
%e105 ###
\begin{eqnarray}
K(\lambda,p):\!&=&
\frac{1}{\sqrt{2\pi}}
\int_{-\infty}^\infty
\frac{\hat b(q)}{\lambda+ (p+q)^2/2}
\,dq\nonumber\\[-8pt]\\[-8pt]
& =&
\frac{1}{\sqrt{2\pi}}
\int_{-\infty}^\infty
\frac{\hat b(q-p)}{\lambda+ q^2/2}
\,dq.\nonumber
\end{eqnarray}
In the last step we used the Cauchy--Schwarz inequality and the fact
that~$\hat b(\cdot)$ is a nonnegative even function.

From (\ref{varform_1}), (\ref{first}), (\ref{second}) and (\ref
{third2}) it follows that
%
%e106 ###
\begin{equation}
\label{lower}
\bigl(\varphi, (\lambda-G)^{-1}\varphi\bigr)
\ge
\int_{-\infty}^{\infty}
\frac{\hat b(p)}
{\lambda+ p^2/2 + K(\lambda,p) p^2}
\,dp.
\end{equation}
Next we give an \textit{upper} bound for $K(\lambda,p)$. Let $\delta$
be chosen so that the bounds~(\ref{choosedelta}) hold and assume that
$\lambda<\delta^2/4$. Then
%
%e108 ###
%e107 ###
\begin{eqnarray}\hspace*{5pt}
&&{\mathbh1}_{\{|p|<\lambda^{1/2}\}}K(\lambda, p)\nonumber\\
&&\qquad\le
C_1\sqrt{\frac{2}{\pi}}
{\mathbh1}_{\{|p|<\lambda^{1/2}\}}
\int_{-\infty}^{\infty}
\frac{|q-p|^{\alpha}}{\lambda+ q^2/2}\,dq
+
\sqrt{\frac{2}{\pi}}
\int_{|q|>\delta/2} q^{-2} \hat b(q-p) \,dq
\nonumber\\
\label{intermed}
&&\qquad\le
\lambda^{(\alpha-1)/2}
C_1\sqrt{\frac{2}{\pi}}
\sup_{|r|<1}
\int_{-\infty}^{\infty}
\frac{|q-r|^{\alpha}}{1+ q^2/2}\,dq
+
\sqrt{\frac{2}{\pi}}
\int_{|q|>\delta/2} q^{-2} \hat b(q-p) \,dq
\\
\label{Kupper}
&&\qquad\le
C\lambda^{(\alpha-1)/2}
\end{eqnarray}
with some $C<\infty$, for $\lambda$ sufficiently small. The last
inequality holds since the integrals in (\ref{intermed}) are bounded.

From (\ref{lower}) and (\ref{Kupper}) it follows that for
sufficiently small $\lambda$
%
%e109 ###
\begin{eqnarray}
\label{lower2}
\bigl(\varphi, (\lambda-G)^{-1}\varphi\bigr)
&\ge&
\int_{|p|\le\lambda^{1/2} }
\frac{\hat b(p)}
{\lambda+ C \lambda^{(\alpha-1)/2}p^2}
\,dp
\nonumber\\
&\ge&
\frac{C_2}{2}
\int_{|p|\le\lambda^{1/2} }
\frac{|p|^{\alpha}}
{\lambda+ C \lambda^{(\alpha-1)/2}p^2}\,dp
\nonumber\\[-8pt]\\[-8pt]
&=&
\lambda^{-(1-\alpha)^2/4}
\frac{C_2}{2}
\int_{|r|\le\lambda^{(\alpha-1)/4}}
\frac{|r|^{\alpha}}
{1 + C r^2}\,dr
\nonumber\\
&\ge&
C\lambda^{-(1-\alpha)^2/4}\nonumber
\end{eqnarray}
with some $C>0$, for $\lambda$ sufficiently small.

The lower bound (\ref{lambda_lower_bound}) follows from (\ref
{restoD}) and (\ref{lower2}).
\end{pf*}

\section*{Acknowledgments}
P. Tarr\`{e}s thanks A.-S. Sznitman for a very stimulating discussion.
B. T\'{o}th thanks   the Mittag Leffler Insitute,\vadjust{\goodbreak}
Stockholm, for their kind hospitality, where part of this work was done. B. Valk\'{o} thanks
J.~Quastel for
many enlightening conversations.

%suskaldyti doi

% imsref loaded by lrinkeviciute, 2011-04-05 09:51:58
% imsref loaded by lrinkeviciute, 2011-04-05 09:53:55
%

%
\printaddresses

\end{document}